\newcommand{\B}{\mathbb{B}}
\newcommand{\N}{\mathbb{N}}
\newcommand{\Zc}{\mathcal{Z}}
\newcommand{\Hc}{\mathcal{H}}
\newcommand{\R}{\mathbb{R}}
\newcommand{\C}{\mathbb{C}}
\newcommand{\T}{\mathbb{T}}
\newcommand{\dist}{\text{dist}}
\newcommand{\from}{\colon}
\newtheorem{thm}{Theorem}[section]
\newtheorem{lem}[thm]{Lemma}
\newtheorem{prop}[thm]{Proposition}
\theoremstyle{definition}
\newtheorem{defn}[thm]{Definition}
\theoremstyle{remark}
\newtheorem{rem}[thm]{Remark}
\numberwithin{equation}{section}
\title[Hyperreflexivity constants of the bounded $n$-cocycle spaces]
{Hyperreflexivity constants of the bounded $n$-cocycle spaces of group algebras and C$^*$-algebras}
\author[Ebrahim Samei]{Ebrahim Samei}
\author[Jafar Soltani Farsani]{Jafar Soltani Farsani}
\address{Department of Mathematics and Statistics,
University of Saskatchewan, 106 Wiggins Road, Saskatoon, SK, CANADA.}
\email{samei@math.usask.ca}
\email{jas637@mail.usask.ca}
\subjclass{Primary 47B47, 46L05,
 43A20.}
\keywords{Reflexivity, hyperreflexivity, hyperreflexivity constant, $n$-cocycles, C$^*$-algebras, group algebras, groups with polynomial growth, amenability}
\begin{document}

\maketitle

\begin{abstract}
We introduced the concept of strong property $(\B)$ with a constant for Banach algebras and, by applying certain analysis on the Fourier algebra of a unit circle, we show that all C$^*$-algebras and group algebras have the strong property $(\B)$ with a constant given by $288\pi(1+\sqrt{2})$. We then use this result to find a concrete upper bound for the hyperreflexivity constant of $\Zc^n(A,X)$, the space of bounded $n$-cocycles from $A$ into $X$, where $A$ is a C$^*$-algebra or the group algebra of a group with an open subgroup of polynomial growth and $X$ is a Banach $A$-bimodule for which $\Hc^{n+1}(A,X)$ is a Banach space. As another application, we show that for a locally compact amenable group $G$ and $1<p<\infty$, the space
$CV_P(G)$ of convolution operators on $L^p(G)$ are hyperreflexive with a constant given by $288\pi(1+\sqrt{2})$. This is the generalization of a well-known result of E. Christiensen in \cite{Christen} for $p=2$.
\end{abstract}

\section{Introduction}
The concept of hyperreflexivity is a strengthening of the well-known notion of reflexivity. The later notion that was first defined in \cite{AL28} has attracted much attentions during the years. It has its origin in the operator theory and at first was defined for the subspaces of $B(X)$. In \cite{E.S ref10}, D. R. Larson generalized the concept of reflexivity to the subspaces of $B(X,Y)$, where $X$ and $Y$ are Banach spaces. One goal was to study the local behavior of derivations from a Banach algebra $A$ to a Banach $A$-bimodule $X$.

Let $A$ be a Banach algebra and $X$ a Banach $A$-bimodule. One interesting question is under what conditions each local derivation from $A$ into $X$ is a derivation, or equivalently, when $Z^1(A,X)$ is algebraically reflexive. One could also study the continuous version of this question: when the space of bounded derivations from $A$ into $X$, namely $\Zc^1(A,X)$, is reflexive. B. E. Johnson showed that $Z^1(A,X)$ is algebraically reflexive when $A$ is a C$^*$-algebra (\cite{E.S ref8}). In \cite{E.S2}, the first named author generalized the concept of local derivations to the higher cohomologies and defined the local $n$-cocycles. He showed that if $A$ is a C$^*$-algebra and $X$ a Banach $A$-bimodule, then every bounded local $n$-cocycle from $A^{(n)}$ into $X$ is an $n$-cocycle. In a subsequent paper, he introduced the concept of reflexivity for bounded $n$-linear maps (\cite{E.S1}). He showed that if $G$ is a locally compact group with an open subgroup of polynomial growth and $X$ a Banach $L^1(G)$-bimodule, then $\Zc^n(L^1(G), X)$, the space of bounded $n$-cocycles from $A$ into $X$, is reflexive. More results related to these questions can be found in \cite{AL7, AL9, AL12, AL13, AL14, AL15, AL16, E.S ref9,  E.S ref10, AL21, E.S3}.

As it was pointed out above, the concept of hyperreflexivity is a strengthening of reflexivity. This concept was first introduced by Arveson in \cite{AL4} and proved to be powerful in operator theory.
For instance, mainly due to the work of E. Christensen, it is shown that injective von Neumann algebras
are hyperreflexive \cite{Christen}. It is not known whether one can remove ``injectivity" from the preceding statement. In fact, this is equivalent with several open problems in operator algebras including the Kadison's similarity problem \cite{Pis}. The first attempt in studying the hyperreflexivity for the space of
 derivations was done by V. Shulman in \cite{Sh} where he showed that $\Zc^1(A,A))$ is hyperreflexive for a C$^*$-algebra $A$ if $\Hc^2(A,A)=0$.
For group algebras, it was first shown in \cite{AEV2} that $\Zc^1(L^1(G),L^1(G))$ is hyperreflexive for each amenable SIN-group. In \cite{E.S1}, the first named author extended the preceding result and showed that $\Zc^1(L^1(G),X^*)$ is hyperreflexive if $G$ is an amenable locally compact group with an open subgroup which is of polynomial growth and $X$ is an essential Banach $L^1(G)$-bimodule. In particular, $\Zc^1(L^1(G),L^1(G))$ is hyperreflexive for such a group.
In \cite{AEV3}, the later result was extended further so that one could drop the assumption of ``amenability" 

Our goal in this article is to extend the concept of hyperreflexivity to the higher cocycles. For Banach spaces $X$ and $Y$, we first define hyperreflexivity for subspaces of $B^n(X,Y)$, taking into account the
$n$-linear form of these spaces. Then we focus on $\Zc^n(A,X)$, the space of bounded $n$-cocycles from a Banach algebra $A$ into a Banach $A$-bimodule $X$ and investigate when it can be hyperreflexive. Our investigation leads us to find sufficient conditions under which $\Zc^n(A,X)$
becomes hyperreflexive. We demonstrate that for a large classes of Banach algebras, including nuclear C$^*$-algebra and group algebras of groups with open subgroups of polynomial growth, these sufficient conditions hold which give evidence that our conditions are satisfactory. For the case when $n=1$, our results  include and, at the same time, generalize all the ones already obtained in the literature pointed out in the preceding paragraph.

We then use this result to show that if $A$ is a C$^*$-algebra or the group algebra of a group with an open subgroup of polynomial growth and if $X$ is a Banach $A$-bimodule for which $\Hc^{n+1}(A,X)$ is a Banach space, then the hyperreflexivity constant of $\Zc^n(A,X)$, the space of bounded $n$-cocycles from $A$ into $X$, is bounded by
$$C2^{n-1}(M^2 288\pi(1+\sqrt{2})+(M+1)^2)^{n+1}$$
where $M$ is bound for the local units of $A$ and $C$ is a constant satisfying
$$\dist(T, Z^n(A,X))\leq C\|\delta^n(T)\|, \ \ \ (T\in B^n(Z,X)).$$

We provide examples for which $M$ and $C$ are known. Consequently, we can find a concrete number as a bound for the hyperreflexivity constant of $\Zc^n(A,X)$.

\section{Preliminaries}\label{S:Prelimin}

Let $X$ and $Y$ be Banach spaces. For $n\in \N$, let $X^{(n)}$ be
the Cartesian product of $n$ copies of $X$, and let $L^n(X,Y)$ and
$B^n(X,Y)$ be the spaces of n-linear maps and bounded n-linear
maps from $X^{(n)}$ into $Y$, respectively.

Let $A$ be a Banach algebra, and let $X$ be a Banach $A$-bimodule.
An operator $D\in L(A,X)$ is a derivation if for all $a,b\in A$, $D(ab)=aD(b)+D(a)b$. For each
$x\in X$, the operator $ad_x\in B(A,X)$ defined by $ad_x(a)=ax-xa$
is a bounded derivation, called an inner derivation. Let $Z^1(A,X)$
and $\mathcal{Z}^1(A,X)$ be the linear spaces of derivations and
bounded derivations from $A$ into $X$, respectively. For $n\in \N$
and $T\in L^n(A,X)$, define
\begin{eqnarray*}
\delta^nT:(a_1,\ldots ,a_{n+1}) &\mapsto & a_1T(a_2,\ldots ,a_n)\\
&+ & \sum_{j=1}^n (-1)^jT(a_1,\ldots ,a_ja_{j+1},\ldots ,a_{n+1})\\
& + &  (-1)^{n+1}T(a_1,\ldots ,a_n)a_{n+1}.
\end{eqnarray*}
It is clear that $\delta^n$ is a linear map from $L^n(A,X)$ into
$L^{n+1}(A,X)$; these maps are the {\it connecting maps}. The
elements of $\ker \delta^n$ are the {\it n-cocycles}; we denote this
linear space by $Z^n(A,X)$. If we replace $L^n(A,X)$ with $B^n(A,X)$
in the above, we will have the `Banach' version of the connecting
maps; we denote them with the same notation $\delta^n$. In this
case, $\delta^n$ is a bounded linear map from $B^n(A,X)$ into
$B^{n+1}(A,X)$; these maps are the {\it bounded connecting maps}.
The elements of $\ker \delta^n$ are the {\it bounded n-cocycles}; we
denote this linear space by $\mathcal{Z}^n(A,X)$. It is easy to
check that $Z^1(A,X)$ and $\mathcal{Z}^1(A,X)$ coincide with our
previous definition of these notations.

Let $A$ be a Banach algebra, and let $X$ be a Banach $A$-bimodule.
By \cite[Section 2.8]{D}, for $n\in \N$, the Banach space $B^n(A,X)$
turns into a Banach $A$-bimodule by the actions defined by:
\begin{eqnarray*}
(a\star T)(a_1,\ldots , a_n) &= & aT(a_1,\ldots , a_n);
\end{eqnarray*}
\begin{eqnarray*}
(T\star a)(a_1,\ldots ,a_n) &= & T(aa_1,\ldots ,a_n)  \\
&+ & \sum_{j=1}^n (-1)^jT(a,a_1,\ldots ,a_ja_{j+1},\ldots ,a_n)\\
& + & (-1)^{n+1}T(a,a_1,\ldots ,a_{n-1})a_n.
\end{eqnarray*}
In particular, when $n=1$, $B(A,X)$ becomes a Banach $A$-bimodule
with respect to the products
$$(a\star T)(b)=aT(b) \ \ , \ (T\star a)(b)=T(ab)-T(a)b.$$
Let $\Lambda_n \from B^{n+1}(A,X) \to B^n(A,B(A,X))$ be the
identification given by
$$(\Lambda_n(T)(a_1,\ldots ,a_n))(a_{n+1})=T(a_1,\ldots
,a_{n+1}).$$ Then $\Lambda_n$ is an $A$-bimodule isometric
isomorphism. If we denote the connecting maps for the complex
$B^n(A,(B(A,X),\star))$ by $\Delta^n$, then it is shown in \cite{D} that
\begin{align}\label{Eq:connecting map-higher to lower}
\Lambda_{n+1}\circ \delta^{n+1}=\Delta^n\circ \Lambda_n.
\end{align}


\section{A constant for the strong property $(\B)$}

The concept of the strong property $(\B)$ first appeared in \cite{AEV} for C$^*$-algebras and group algebras, where it was shown that they all possess this property. However, it was formally formulated and introduced in \cite{ES-JS} for general Banach algebras and was used to obtain hyperreflexivity of
bounded $n$-cocyle spaces from various Banach algebras. Since we are looking for further information such as a bound for the hyprereflexivity constant, we require a more refined version of the strong property $(\B)$, i.e. when its associated function is a line as described below.

\begin{defn}\label{def to B strong with constant}
We say that a Banach algebra $A$ has {\it the strong property $(\B)$ with a constant} $r>0$ if for each Banach space $X$ and every bounded bilinear map $\varphi:A\times A\rightarrow X$ with the property that
$$a,b\in A \ \ ab=0 \Rightarrow \|\varphi(a,b)\|\leq \alpha \|a\| \|b\|,$$
We can infer that
$$\|\varphi(ab,c)-\varphi(a,bc)\|\leq r \alpha \|a\|\|b\|\|c\| \ \ \ (\forall a,b,c\in A).$$
In other word,
$$\|\varphi(ab,c)-\varphi(a,bc)\|\leq r \alpha(\varphi) \|a\|\|b\|\|c\|, \ \ \ (\forall a,b,c\in A)$$
where
$$\alpha(\varphi)=\sup \{\|\varphi(a,b)\|: \ \ a,b\in A, \ \|a\|, \|b\|\leq 1, \ ab=0\}.$$
We note that by a simple application of Hahn-Banach theorem, it suffices to check the preceding property for the case when $X=\C$. We will use this alternative definition when it is more convenient.
\end{defn}

We will see later in Section \ref{hyper constant c-star and grp} that existence of a constant for the strong property $(\B)$ is fundamental in finding an upper bound for the hypereflexivity constant of the bounded $n$-cocycle spaces.

\subsection{Fourier algebra of the unit circle}\label{constant for the strong property (B)circle}

As it was mentioned above, in order to achieve our goal in finding an upper bound for the hyperreflexivity constant of the bounded $n$-cocycle spaces of C$^*$-algebras and group algebras, we need to find a constant for the strong property $(\B)$ for such Banach algebras. In the present section we aim to find such a constant for the Fourier algebra of the unit circle. Interestingly, we only need to study this case to find a constant for the strong property $(\B)$ of C$^*$-algebras and group algebras (see Theorem \ref{C* alg and group alg constant B}).

Let $\mathbb{T}$ denote the unit circle in $\mathbb{C}$, i.e.
$$\mathbb{T}=\{z\in \mathbb{C}: \ |z|=1\}.$$
Here we identify $\mathbb{T}$ with ${\mathbb{R}}/{\mathbb{Z}}\cong [-\pi,\pi]$. In this case $s=t$ if $s\equiv t (mod\,2\pi\mathbb{Z}).$
For every $f\in L^1(\mathbb{T})$, the Fourier transform on $f,$ denoted by $\hat{f},$ is defined by
$$\hat{f}(n)=\frac{1}{2\pi}\int_{-\pi}^{\pi}f(t)e^{-int}dt, \ \ \ (n\in\mathbb{Z}).$$
The Fourier algebra of the unit circle is defined as follows
$$A(\mathbb{T})=\{f\in L^1(\mathbb{T}): \ \|f\|_{A(\mathbb{T})}=\sum_{n\in\mathbb{Z}}|\hat{f}(n)|<\infty\}.$$
It is well-known that $A(\mathbb{T})\subseteq C(\mathbb{T})$, the space of continuous functions on $\mathbb{T}.$ Also $A(\mathbb{T})$ with the pointwise addition and multiplication and the norm $\|\cdot\|_{A(\mathbb{T})}$ is a Banach algebra.

The following lemma is essential for us to get our result.

\begin{lem}\label{lem T hyp const}
Let $X$ be a Banach space and $F:A(\T)\rightarrow X$ a linear map with $\|F\|\leq 1$. Suppose that $0\leq\alpha\leq1$ is such that for each $\varphi, \psi\in A(\T)$ with $\text{supp}\,\varphi\cap \text{supp}\,\psi=\emptyset$, we have
$$ \|F(\varphi*\check{\psi})\|\leq \alpha \|\varphi\| \|\psi\|.$$
Let $f\in A(\T)$ be given by $f(s)=e^{is}-1.$ Then
$$\|F(f)\|\leq 12\sqrt{\pi(1+\sqrt{2})}\sqrt{\alpha}.$$
\end{lem}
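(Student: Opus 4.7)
The plan is a parameter-balancing argument: for each integer $N\geq 1$, decompose $f = h_N + g_N$ in $A(\T)$ so that $h_N$ has small $A(\T)$-norm while $g_N$ admits a representation in terms of convolutions of disjoint-support pieces with norm cost of order $N$. Applying the trivial bound $\|F(h_N)\| \leq \|h_N\|_{A(\T)}$ together with the hypothesis $\|F(g_N)\|\leq C_2 N\alpha$ gives $\|F(f)\| \leq C_1/N + C_2 N\alpha$, and optimizing over $N\approx \sqrt{C_1/(C_2\alpha)}$ yields the $\sqrt\alpha$-scaling $\|F(f)\|\leq 2\sqrt{C_1 C_2 \alpha}$. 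Tracking the constants through optimal bump-shape choices should give $C_1 C_2 = 36\pi(1+\sqrt 2)$, matching the stated $12\sqrt{\pi(1+\sqrt 2)}$.

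By the Hahn--Banach reduction noted in Definition \ref{def to B strong with constant}, I may assume $X = \C$, so $F$ is identified with a pseudomeasure $(\phi_n)\in\ell^\infty(\Z)$ with $\|\phi\|_\infty \leq 1$ via $F(g) = \sum_n \hat g(n) \phi_n$. The desired conclusion becomes $|\phi_1 - \phi_0| \leq 12\sqrt{\pi(1+\sqrt 2)}\sqrt\alpha$, and the hypothesis reads
$$\Bigl|\sum_n \hat\varphi(n)\hat\psi(-n)\phi_n\Bigr|\leq \alpha \|\hat\varphi\|_1 \|\hat\psi\|_1 \quad\text{whenever }\supp\varphi\cap\supp\psi=\emptyset.$$
Note that any such convolution automatically satisfies $(\varphi*\check\psi)(0)=0$, so the hypothesis really controls $F$ on a subspace of the augmentation ideal $I_0=\{g\in A(\T):g(0)=0\}$, which is precisely where $f$ sits.

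Concretely, I would fix a positive-type trapezoidal cut-off $u_N \in A(\T)$ that equals $1$ on $[-\pi/(2N), \pi/(2N)]$ and is supported in $[-\pi/N,\pi/N]$, with $\hat{u_N}\geq 0$, so that $\|u_N\|_{A(\T)} = u_N(0) = 1$. Setting $h_N = u_N f$ and $g_N = (1-u_N) f$, one exploits $\hat f = \delta_1 - \delta_0$ to get $\hat{h_N}(n) = \hat{u_N}(n-1) - \hat{u_N}(n)$, so that $\|h_N\|_{A(\T)}$ equals the total variation of the sequence $\hat{u_N}$, which an explicit computation bounds by $C_1/N$. For $g_N$, which vanishes on $[-\pi/(2N),\pi/(2N)]$, the aim is a factorization $g_N = \varphi_N * \check\psi_N$ with $\supp\psi_N \subset [-\pi/(4N),\pi/(4N)]$ and $\supp\varphi_N \subset \T \setminus [-\pi/(4N),\pi/(4N)]$ (so disjointness is automatic), carefully choosing $\psi_N$ so that its Fourier coefficients stay uniformly bounded below on the spectrum of $g_N$ and the deconvolution $\hat\varphi_N(n) = \hat{g_N}(n)/\hat\psi_N(-n)$ yields $\|\hat\varphi_N\|_1 \|\hat\psi_N\|_1 \leq C_2 N$.

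The crux is the convolution factorization in this last step. Writing a function supported away from a neighborhood of $0$ as a single convolution of disjoint-support pieces in $A(\T)$, with linear-in-$N$ norm product, is a delicate inverse problem: wanting $\psi_N$ to act as an approximate identity in convolution favors broad support and rapid spectral decay, while disjointness of supports together with the requirement that the deconvolution lie in $\ell^1$ pull in the opposite direction. The appearance of the specific constant $\sqrt{1+\sqrt 2}=\sqrt{\cot(\pi/8)}$ in the bound strongly suggests an extremal choice of bump shape in the spirit of a Beurling--Selberg-type variational problem on $\T$; this is where I would expect the bulk of the technical work to be concentrated.
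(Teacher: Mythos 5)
Your overall skeleton --- split $f$ into a piece with small $A(\T)$-norm and a piece to which the disjoint-support hypothesis applies, bound the two contributions by $C_1/N$ and $C_2N\alpha$ respectively, and optimize the parameter --- is exactly the shape of the paper's argument (with $\epsilon\sim 1/N$ playing the role of your $1/N$), and your Hahn--Banach reduction and the identification $F(f)=\phi_1-\phi_0$ are fine. But the step you yourself flag as ``the crux'' is a genuine gap, and I do not think it can be repaired in the form you propose. You want to realize $g_N=(1-u_N)f$ \emph{exactly} as a single convolution $\varphi_N*\check{\psi}_N$ with $\supp\,\varphi_N\cap\supp\,\psi_N=\emptyset$, and you propose to construct $\varphi_N$ by deconvolution, $\hat{\varphi}_N(n)=\hat{g}_N(n)/\hat{\psi}_N(-n)$. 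Even granting that $\hat{\psi}_N$ can be kept bounded below on the spectrum of $g_N$, the function $\varphi_N$ is then completely determined by its Fourier coefficients, and there is no mechanism forcing it to be supported in $\T\setminus[-\pi/(4N),\pi/(4N)]$: division of Fourier coefficients destroys all support information. Since disjointness of the supports is precisely what lets you invoke the hypothesis on $F$, the argument stalls at this point. (Your closing speculation that the constant $1+\sqrt{2}$ signals a Beurling--Selberg-type extremal bump is also a red herring: in the actual proof it arises merely as $\|f\|_{A(\T)}+\|v\|_{A(\T)}\le 2+2\sqrt{2}$.)

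The paper's proof sidesteps this inverse problem by building the convolution structure in from the start rather than trying to recover it. With $U,V$ intervals of length comparable to $\epsilon$ satisfying $U+U=V$ and $V+V+V\subseteq\{x:|1-e^{ix}|<\epsilon\}$, it sets $u=\lambda(U)^{-2}\,1_U*1_U$ (an approximate identity with $\|u\|_1=1$, $\supp\,u\subseteq V$, $\|u\|_{A(\T)}\le 6\pi/\epsilon$) and $v=f\cdot(\lambda(V)^{-1}1_{V+V}*1_V)$, which agrees with $f$ on $V$, so that $\supp(f-v)\subseteq V^c$. The test element is $a=(f-v)*\check{u}$: the hypothesis applies to it directly because $\supp(f-v)\cap\supp\,u=\emptyset$ by construction, giving $\|F(a)\|\le\alpha\,\|f-v\|\,\|u\|\le \alpha\cdot 2(1+\sqrt{2})\cdot 6\pi/\epsilon$; and one does \emph{not} need $a$ to equal any prescribed function --- it suffices that $\|f-a\|_{A(\T)}\le\|f-f*\check{u}\|+\|v*\check{u}\|\le\epsilon+2\epsilon$, where the second term is controlled by the Cauchy--Schwarz bound $\|v*\check{u}\|_{A(\T)}\le\|v\|_2\|u\|_2$ together with the smallness of $\|v\|_2$ on the set where $|f|<\epsilon$. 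Optimizing $3\epsilon+12\pi(1+\sqrt{2})\alpha/\epsilon$ over $\epsilon$ gives the stated constant. If you replace your exact factorization of $g_N$ by this ``approximate factorization of $f$ up to an $O(\epsilon)$ error'' device, your outline becomes the paper's proof.
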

\begin{proof}
Let $0<\epsilon<3$. Define
$$W_{\epsilon}=\{x\in \T: \ \|f-R_xf\|_{A(\T)}< \epsilon\},$$
where $(R_xf)(s)=f(s+x).$
Note that for $s\in \T$
$$(f-R_xf)(s)=e^{is}(1-e^{ix}).$$
Hence if we define $e_1(s)=e^{is},$ then
$$\|f-R_xf\|_{A(\T)}=\|e_1\| |1-e^{ix}|=|1-e^{ix}|.$$
So
$$W_{\epsilon}=\{x\in \T: |1-e^{ix}|<\epsilon\}.$$
We show that for each $0<\delta<\epsilon$, $[-(\epsilon-\delta),(\epsilon-\delta)]\subseteq W_{\epsilon}.$ Define $g:[-\pi,\pi]\rightarrow \T$ by $g(s)=1-e^{is}.$
Let $0<x<\pi$. Applying vector-valued mean value theorem to the function $g|_{[0,x]}$, we find $0<c<x$ with
$$|g(x)|=|g(x)-g(0)|\leq|g^{\prime}(c)| |x|\leq |x|.$$
If $-\pi<x<0$, we use the same argument on the interval $[x,0]$. For $x=0$, the inequality trivially holds. So for each $0<\delta<\epsilon$ and for all
$x\in [-(\epsilon-\delta), (\epsilon-\delta)]$ we get
$$|e^{ix}-1|=|g(x)|\leq |x|< \epsilon.$$
It means that $[-(\epsilon-\delta),(\epsilon-\delta)]\subseteq W_{\epsilon}.$ Define
$$V_{\epsilon, \delta}=[\frac{-(\epsilon-\delta)}{3},\frac{(\epsilon-\delta)}{3}], \ \ U_{\epsilon, \delta}=[\frac{-(\epsilon-\delta)}{6},\frac{(\epsilon-\delta)}{6}].$$
Then $V_{\epsilon, \delta}+V_{\epsilon, \delta}+V_{\epsilon, \delta}\subseteq W_{\epsilon}$ and $U_{\epsilon, \delta}+U_{\epsilon, \delta}= V_{\epsilon,\delta}$. Now put
$$u=\frac{1}{\lambda(U_{\epsilon,\delta})^2}1_{U_{\epsilon,\delta}}*1_{U_{\epsilon,\delta}}$$
and
\begin{align}\label{definition v}
v=f(\frac{1}{\lambda(V_{\epsilon,\delta})}1_{V_{\epsilon,\delta}+V_{\epsilon,\delta}}*1_{V_{\epsilon,\delta}}).
\end{align}
Obviously, $1_{U_{\epsilon,\delta}}\in L^2(\T).$ Since $A(\T)=L^2(\T)*L^2(\T)$, we have $u\in A(\T)\subseteq C(\T)\subseteq L^2(\T)\subseteq  L^1(\T).$
It is easy to check that $\|1_{U_{\epsilon,\delta}}\|_2=\sqrt{\lambda(U_{\epsilon,\delta})}.$
By definition of the Fourier norm,
\begin{equation}\label{fourier norm of u}
\begin{aligned}
\|u\|_{A(\T)}&\leq& \frac{1}{\lambda(U_{\epsilon,\delta})^2} \|1_{U_{\epsilon,\delta}}\|_2 \|1_{U_{\epsilon,\delta}}\|_2\\
&=&\frac{1}{\lambda(U_{\epsilon,\delta})}=\frac{6\pi}{\epsilon-\delta}.
\end{aligned}
\end{equation}
Since $1_{U_{\epsilon,\delta}}\in L^2(\T)\subseteq L^1(\T)$ and $L^2(\T)$ is $L^1(\T)$-module with respect to the convolution,
\begin{align}\label{2-norm of u--1}
\|u\|_2\leq \frac{1}{\lambda(U_{\epsilon,\delta})^2} \|1_{U_{\epsilon,\delta}}\|_1 \|1_{U_{\epsilon,\delta}}\|_2.
\end{align}
It is easy to check that $\|1_{U_{\epsilon,\delta}}\|_1=\lambda(U_{\epsilon,\delta}).$ So by \eqref{2-norm of u--1},
\begin{align}\label{2-norm of u}
\|u\|_2\leq \frac{\lambda(U_{\epsilon,\delta})^{\frac{3}{2}}}{\lambda(U_{\epsilon,\delta})^2}=\frac{1}{\lambda(U_{\epsilon,\delta})^{\frac{1}{2}}}=\sqrt{\frac{6\pi}{\epsilon-\delta}}.
\end{align}
We show that $\text{supp}\,u\subseteq U_{\epsilon,\delta}+U_{\epsilon,\delta}.$ Let $x\in[-\pi,\pi]$. Then
\begin{eqnarray*}
(1_{U_{\epsilon,\delta}}*1_{U_{\epsilon,\delta}})(x)&=&
\frac{1}{2\pi}\int_{-\pi}^{\pi}1_{U_{\epsilon,\delta}}(y)1_{U_{\epsilon,\delta}}(x-y)dy\\
&=&\frac{1}{2\pi}\int_{U_{\epsilon,\delta}}1_{U_{\epsilon,\delta}}(x-y)dy.
\end{eqnarray*}
So for $x$ to be in $\text{supp}\,u$, there should exist $y\in \text{supp}\,1_{U_{\epsilon,\delta}}=U_{\epsilon,\delta}$ such that $x-y \in \text{supp}\ 1_{U_{\epsilon,\delta}}=U_{\epsilon,\delta}$. So
$x\in U_{\epsilon,\delta}+U_{\epsilon,\delta}$.\\
We also have
\begin{align}\label{1-norm of u}
\|u\|_1=\frac{1}{2\pi}\int_{-\pi}^{\pi}u(x) dx=\frac{1}{\lambda(U_{\epsilon,\delta})^2} \|1_{U_{\epsilon,\delta}}\|_1 \|1_{U_{\epsilon,\delta}}\|_1=1.
\end{align}
Next we prove some properties related to $v$ defined in \eqref{definition v}.\\
First of all, note that $1_{V_{\epsilon,\delta}+V_{\epsilon,\delta}}, 1_{V_{\epsilon,\delta}}\in L^2(\T).$ So $1_{V_{\epsilon,\delta}+V_{\epsilon,\delta}}* 1_{V_{\epsilon,\delta}}\in A(\T)$ which implies that $v\in A(\T).$ Also
\begin{eqnarray*}
\|v\|_{A(\T)}&\leq& \|f\|_{A(\T)} \|\frac{1}{\lambda(V_{{\epsilon,\delta}})}1_{V_{\epsilon,\delta}+V_{\epsilon,\delta}}*1_{V_{\epsilon,\delta}} \|_{A(\T)}\\
&\leq& \frac{1}{\lambda(V_{{\epsilon,\delta}})}\|f\|_{A(\T)} \|1_{V_{\epsilon,\delta}+V_{\epsilon,\delta}}\|_2 \|1_{V_{\epsilon,\delta}}\|_2.
\end{eqnarray*}
Obviously, $\|1_{V_{\epsilon,\delta}+V_{\epsilon,\delta}}\|_2=\sqrt{\lambda(V_{\epsilon,\delta}+V_{\epsilon,\delta})}$ and $\|1_{V_{\epsilon,\delta}}\|_2=\sqrt{\lambda(V_{\epsilon,\delta})}$. So
\begin{equation}\label{fourier norm of v}
\begin{aligned}
\|v\|_{A(\T)}&\leq& \|f\|_{A(\T)} (\frac{\lambda(V_{\epsilon,\delta}+V_{\epsilon,\delta})}{\lambda(V_{\epsilon,\delta})})^{\frac{1}{2}}\\
&=& 2(\frac{\frac{4(\epsilon-\delta)}{6\pi}}{\frac{2(\epsilon-\delta)}{6\pi}})^{\frac{1}{2}}=2\sqrt{2}.
\end{aligned}
\end{equation}
Using \eqref{fourier norm of v}, we can write
\begin{equation}\label{fourier norm of f-v}
\begin{aligned}
\|f-v\|_{A(\T)}&\leq&\|f\|_{A(\T)}+\|v\|_{A(\T)}\\
&\leq& 2(1+\sqrt{2}).
\end{aligned}
\end{equation}
Similar to what we proved for $u$, we have
$$\text{supp}\, v\subseteq \text{supp}\,(1_{V_{\epsilon,\delta}+V_{\epsilon,\delta}}*1_{V_{\epsilon,\delta}})\subseteq V_{\epsilon,\delta}+V_{\epsilon,\delta}+V_{\epsilon,\delta}\subseteq W_{\epsilon}.$$
We now show that for each $x\in V_{\epsilon,\delta}$, $f(x)=v(x).$ To see this, take $x\in V_{\epsilon,\delta}$. Then
\begin{eqnarray*}
(1_{V_{\epsilon,\delta}+V_{\epsilon,\delta}}*1_{V_{\epsilon,\delta}})(x)&=&\frac{1}{2\pi}\int_{-\pi}^{\pi}1_{V_{\epsilon,\delta}+V_{\epsilon,\delta}}(x-w)1_{V_{\epsilon,\delta}}(w)dw\\
&=&\frac{1}{2\pi}\int_{V_{\epsilon,\delta}}1_{V_{\epsilon,\delta}+V_{\epsilon,\delta}}(x-w)dw\\
&=&\lambda(V_{\epsilon,\delta})
\end{eqnarray*}
Hence $f(x)=v(x).$ This implies that
\begin{align}\label{333333}
\text{supp}\, (f-v)\subseteq V_{\epsilon,\delta}^c.
\end{align}
We show that $\|v\|_2\leq2\epsilon\sqrt{\frac{\epsilon-\delta}{3}}.$ Let $x\in W_{\epsilon}$. Then
\begin{eqnarray*}
|f(x)|&=&|f(0)-R_xf(0)|\\
&\leq&\|f-R_xf\|_{\infty}\\
&\leq&\|f-R_xf\|_{A(\T)}\\
&<&\epsilon.
\end{eqnarray*}
Since $\text{supp}\,v\subseteq W_{\epsilon}$, we get
\begin{eqnarray*}
\|v\|_{2}^{2}&=& \frac{1}{2\pi}\int_{W_{\epsilon}}|f(t)|^2 |\frac{1}{\lambda(V_{{\epsilon}})}1_{V_{\epsilon,\delta}+V_{\epsilon,\delta}}*1_{V_{\epsilon,\delta}}(t)|^2 dt\\
&\leq&\epsilon^2 \frac{1}{\lambda(V_{\epsilon,\delta})^2}\|1_{V_{\epsilon,\delta}+V_{\epsilon,\delta}}*1_{V_{\epsilon,\delta}}\|_2^2\\
&\leq&\epsilon^2 \frac{1}{\lambda(V_{{\epsilon,\delta}})^2}\|1_{V_{\epsilon,\delta}+V_{\epsilon,\delta}}\|_{2}^{2} \|1_{V_{\epsilon,\delta}}\|_{1}^{2}\\
&=&\epsilon^2 \frac{1}{\lambda(V_{{\epsilon,\delta}})^2}\lambda(V_{{\epsilon}}+V_{{\epsilon}})\lambda(V_{{\epsilon,\delta}})^2\\
&=&\epsilon^2\frac{4(\epsilon-\delta)}{6\pi}.
\end{eqnarray*}
This implies that
\begin{align}\label{2-norm of v}
\|v\|_2\leq 2\epsilon \sqrt{\frac{\epsilon-\delta}{6\pi}}.
\end{align}
We now show that $\|f-f*\check{u}\|_{A(\T)}\leq  \epsilon$. We can write $f*\check{u}$ as a Bochner integral
$$f*\check{u}=\frac{1}{2\pi}\int_{-\pi}^{\pi}{u}(x)R_xfdx.$$
By \eqref{1-norm of u}, $\frac{1}{2\pi}\int_{-\pi}^{\pi} u(x)dx=1.$ Therefore
\begin{eqnarray*}
\|f-f*\check{u}\|_{A(\T)}&=&\frac{1}{2\pi}\|\int_{-\pi}^{\pi}(f-R_x f){u}(x)dx\|_{A(\T)}\\
&\leq&\frac{1}{2\pi}\int_{U_{\epsilon,\delta}+U_{\epsilon,\delta}}\|(f-R_x f)\|_{A(\T)} |{u}(x)| dx\\
&<& \epsilon,
\end{eqnarray*}
where the last inequality follows from the fact that ${U_{\epsilon,\delta}+U_{\epsilon,\delta}}\subseteq W_{\epsilon}$ and $\|u\|_1=1.$
On the other hand, using \eqref{2-norm of u} and \eqref{2-norm of v}, we get
\begin{eqnarray*}
\|v*\check{u}\|_{A(\T)}&\leq&\|u\|_2 \|v\|_2\\
&\leq& \sqrt{\frac{6\pi}{\epsilon-\delta}}2\epsilon \sqrt{\frac{\epsilon-\delta}{6\pi}}=2\epsilon.
\end{eqnarray*}
So if we put $a=(f-v)*\check{u}$, then
\begin{equation}\label{fourier norm of f-a}
\begin{aligned}
\|f-a\|_{A(\T)}&\leq& \|f-f*\check{u}\|_{A(\T)}+\|v*\check{u}\|_{A(\T)}\\
&<& \epsilon+2\epsilon= 3\epsilon.
\end{aligned}
\end{equation}
Now we can write
\begin{eqnarray*}
\|F(f)\|&=&\|F(f-a+a)\|\\
&\leq& \|F(f-a)\|+\|F(a)\|.
\end{eqnarray*}
Since $a=(f-v)*\check{u}$ and by \eqref{333333}, $\text{supp}\,(f-v)\cap \text{supp}\,\check{u}\subseteq V_{\epsilon,\delta}^c\cap V_{\epsilon,\delta}=\emptyset$, we have (by hypothesis)
$$\|F(a)\|\leq \alpha\|f-v\|_{A(\T)}\|u\|_{A(\T)}.$$
Hence
$$\|F(f)\|\leq \|f-a\|_{A(\T)}+\alpha\|f-v\|_{A(\T)}\|u\|_{A(\T)}.$$
Using \eqref{fourier norm of u}, \eqref{fourier norm of f-v} and \eqref{fourier norm of f-a}, we get
$$\|F(f)\|\leq 3\epsilon+\alpha 2(1+\sqrt{2})\frac{6\pi}{\epsilon-\delta}\ \ \ (0<\epsilon<3 \ \ \ 0<\delta<\epsilon) .$$
 Letting $\delta\rightarrow 0$, $A=3$ and $B=12\pi(1+\sqrt{2})$, we have
 \begin{align}\label{main}
\|F(f)\|\leq \inf \{A \epsilon+\frac{\alpha B}{\epsilon}, \ \ \ 0<\epsilon< 3 \}.
\end{align}
 Define $k:(0,3)\rightarrow R^{+}$ by $k(\epsilon)=A \epsilon+\frac{\alpha B}{\epsilon}$. Then
 $$k^{\prime}(\epsilon)=A-\frac{\alpha B}{\epsilon^2}=0\Rightarrow \epsilon=\sqrt{\frac{\alpha B}{A}}.$$
 Note that for each $0\leq\alpha\leq1$ we have $\sqrt{\frac{\alpha B}{A}}<3$ . So by \eqref{main} we can write
 $$\|F(f)\|\leq k(\sqrt{\frac{\alpha B}{A}}) =2\sqrt{AB\alpha}= 12\sqrt{\pi(1+\sqrt{2})}\sqrt{\alpha}.$$
\end{proof}

We are now ready to prove the first main result of this section which, in part, implies that $A(\T)$ has the strong property $(\B)$ with the constant $288\pi(1+\sqrt{2})$. The method of our proof was partly inspired by \cite[Lemma 3.1]{AEV} and its proof but it goes further to provides a concrete constant for the strong
property $(\B)$.

\begin{thm}\label{mainn thm for strong (B) constant of A(T)}
Let $\phi:A(\T)\times A(\T)\rightarrow \mathbb{C}$ be a continuous bilinear map satisfying the property
\begin{align}\label{prop}
f,g\in A(\T), \ \text{supp}\,f\cap \text{supp}\,g=\emptyset\Rightarrow |\phi(f,g)|\leq \alpha \|f\|\|g\|
\end{align}
for some $\alpha\geq0.$ Then
\begin{align}\label{result}
|\phi(fg,h)-\phi(f,gh)|\leq 288\pi(1+\sqrt{2}){\alpha} \|f\|\|g\|\|h\|
\end{align}
for all $f,g,h\in A(\T).$
\end{thm}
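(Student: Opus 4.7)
The plan is to reduce, via Hahn--Banach, to $X=\C$ and, by homogeneity, to assume $\|f\|=\|g\|=\|h\|=1$. Writing $C_0:=12\sqrt{\pi(1+\sqrt{2})}$ for the constant of Lemma~\ref{lem T hyp const}, observe that the target constant factors as $288\pi(1+\sqrt{2})=2C_0^2$. The arithmetic of this constant strongly suggests two (suitably chained) applications of Lemma~\ref{lem T hyp const}, with the extra factor of $2$ accounted for by a triangle inequality at the combination step.

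A useful preliminary observation is the \emph{partial disjoint-support bound} for the trilinear form $A_\phi(f,g,h):=\phi(fg,h)-\phi(f,gh)$: whenever any two of $f,g,h$ have disjoint supports in $\T$, one has $|A_\phi(f,g,h)|\le 2\alpha\|f\|\|g\|\|h\|$. For instance, if $\text{supp}\,f\cap\text{supp}\,g=\emptyset$ then $fg=0$ and $\text{supp}(gh)\subseteq\text{supp}\,g$ is disjoint from $\text{supp}\,f$, so $|A_\phi(f,g,h)|=|\phi(f,gh)|\le\alpha\|f\|\|g\|\|h\|$; the remaining two cases are handled in the same way.

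Next I would fix $h\in A(\T)$ with $\|h\|\le 1$ and introduce the bilinear form $\phi_h(f,g):=A_\phi(f,g,h)$. The strategy is to extract from $\phi_h$ a linear functional on $A(\T)$ whose convolution-disjoint-support constant is controlled by $\alpha$, using the identification $A(\T)=L^2(\T)\ast L^2(\T)$ together with the Cauchy--Schwarz bound $\|\varphi\ast\check\psi\|_{A(\T)}\le\|\varphi\|_2\|\psi\|_2\le\|\varphi\|_{A(\T)}\|\psi\|_{A(\T)}$. Applying Lemma~\ref{lem T hyp const} to this functional yields a bound of order $C_0\sqrt{\alpha}$ on $\phi_h(f_0,\cdot)$, where $f_0(s)=e^{is}-1$. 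Repeating the procedure in the second variable extracts another factor of $C_0\sqrt{\alpha}$, the two bounds combining multiplicatively to give $C_0^2\alpha$ for the value of $\phi_h$ at the test-function pair $(f_0,g_0)$. Finally, the extension from test functions to arbitrary $f,g\in A(\T)$ rests on translation-invariance of $A(\T)$ (the translations $R_x$ are isometries preserving disjoint supports) and dilation-invariance (the maps $M_nf(s):=f(ns)$ are isometries on $A(\T)$, as is easily verified on the Fourier side), combined with density of trigonometric polynomials. A triangle inequality at this stage introduces the extra factor of $2$, giving the stated bound $2C_0^2\alpha=288\pi(1+\sqrt{2})\alpha$.

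The principal technical obstacle is the verification, at the first application of Lemma~\ref{lem T hyp const}, of the convolution disjoint-support hypothesis for the linear functional extracted from $\phi_h$: one must bridge between the pointwise-disjoint-support hypothesis imposed on $\phi$ (in the pointwise algebra $A(\T)$) and the convolution structure $\varphi\ast\check\psi$ appearing in the lemma (coming from $L^1(\T)$-convolution). This bridging rests on the $L^2$-factorization $A(\T)=L^2(\T)\ast L^2(\T)$ together with the partial disjoint-support bound established above. Once this technical step is secure, the subsequent chaining of the lemma and the passage from test functions to general elements of $A(\T)$ via translation, dilation, and density are essentially bookkeeping, with the exact constant $288\pi(1+\sqrt{2})$ emerging naturally from the factorization $2C_0^2$.
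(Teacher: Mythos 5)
Your reverse-engineering of the constant as $2C_0^2$ with $C_0=12\sqrt{\pi(1+\sqrt{2})}$ is on target, and several of your ingredients match the paper's proof: one does build a linear functional $F=\Phi\circ N_{f,h}$ on $A(\T)$ from $\phi$ and a fixed pair $f,h$, verify its convolution disjoint-support hypothesis via the Bochner formula $N(\varphi*\check{\psi})=\int_{\T}R_x\varphi\otimes R_x\psi\,dx$ (this, rather than your ``partial disjoint-support bound'', is the bridge between pointwise and convolution structure), pass from $e_1$ to $e_n$ by dilation and to general $g$ by Fourier expansion, and the factor $2$ does come from $\|fg\otimes h-f\otimes gh\|\le 2\|f\|\|g\|\|h\|$. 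But the central mechanism you propose for producing $C_0^2\alpha$ --- two applications of Lemma~\ref{lem T hyp const}, one in each variable, ``combining multiplicatively'' --- does not work. Each application of the lemma converts a disjoint-support constant $\alpha$ into the single scalar bound $C_0\sqrt{\alpha}$ at the test function $e_1-1$; two independent applications yield two separate bounds of the same order $C_0\sqrt{\alpha}$ on different quantities, with no product structure that would let you multiply them, while genuinely chaining them (feeding the output of one as the input constant of the other) would give $C_0\sqrt{C_0\sqrt{\alpha}}\sim\alpha^{1/4}$, not $C_0^2\alpha$. Your scheme therefore never escapes the $\sqrt{\alpha}$ barrier, and for small $\alpha$ a bound of order $\sqrt{\alpha}$ is strictly weaker than the linear bound the theorem asserts.

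The missing idea is a self-improvement (renormalization) argument. A single application of the lemma, combined with dilation and Fourier expansion, gives $\|\Phi|_I\|\le C_0\sqrt{\alpha}$, where $\Phi$ is the induced functional on $A(\T\times\T)=A(\T)\hat{\otimes}A(\T)$, $I=\ker m$ is the kernel of the multiplication map, and one assumes $\|\Phi\|\le 1$ and $\alpha\le 1$. One then applies this very estimate to the normalized functional $\Phi_0=\Phi|_I/\|\Phi|_I\|$, extended by Hahn--Banach to a norm-one functional $\Psi$ on $A(\T\times\T)$: its disjoint-support constant is $\alpha_0=\alpha/\|\Phi|_I\|$, so the square-root estimate reads $1=\|\Psi|_I\|\le C_0\sqrt{\alpha/\|\Phi|_I\|}$, which rearranges to $\|\Phi|_I\|\le C_0^2\alpha$. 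This is where both the square on $C_0$ and the linear dependence on $\alpha$ come from; the final factor of $2$ is then exactly the norm of $fg\otimes h-f\otimes gh$ in $I$. Without this bootstrap your argument cannot reach the stated inequality.
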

\begin{proof}
First assume that $0\leq\alpha< 1$ and $\|\phi\|\leq 1$. The map $\phi$ gives rise to a continuous linear operator $\Phi$ on the projective tensor product $A(\T)\hat{\otimes} A(\T)(=A(\T\times\T))$ defined through
\begin{align}\label{gives rise}
\Phi(f\otimes g)=\phi(f,g) \ \ \ (f,g\in A(\T)).
\end{align}
We define $N: A(\T)\rightarrow A(\T\times\T)$ with
$$Nk(s,t)=k(s-t) \ \ (k\in A(\T), \ s,t\in\T).$$
Pick $f,h\in A(\T)$ with $\|f\|, \|h\|\leq 1$ and define $N_{f,h}: A(\T)\rightarrow A(\T\times\T)$ with
$$N_{f,h}k=Nk(f\otimes e_1h)$$
where $e_1\in A(\T)$ is given by $e_1(s)=e^{is}.$  Then it is easy to check that
\begin{align}\label{Nf}
N_{f,h}(e_1-1)=fe_1\otimes h-f\otimes e_1 h.
\end{align}
Note that for $\psi,\varphi\in A(\T),$ we have the Bochner integral equality
$$N(\varphi*\check{\psi})=\int_{\T}R_x\varphi\otimes R_x{\psi} dx.$$
Hence
\begin{align}\label{Bochner def}
N_{f,h}(\varphi*\check{\psi})=\int_{\T}(R_x\varphi) f\otimes (R_x{\psi})e_1h dx.
\end{align}
If $\text{supp}\,\varphi\cap \text{supp}\,\psi=\emptyset,$ then we have
$$\text{supp}\,((R_x\varphi) f)\cap \text{supp}\,((R_x{\psi})e_1 h)=\emptyset.$$
Hence using \eqref{Bochner def} we get
\begin{eqnarray*}
|\Phi \circ N_{f,h}(\varphi*\check{\psi})|&\leq& \int_{\T}\|\Phi((R_x\varphi) f\otimes (R_x{\psi})e_1 h)\|dx\\
&\leq&\int_{\T}\|\phi((R_x\varphi) f, (R_x{\psi})e_1 h)\|dx \ \ \ (by  \ \eqref{prop})\\
&\leq&\int_{\T}\alpha \|\phi(R_x\varphi) f\| \|(R_x{\psi})e_1 )\|dx\\
&\leq& \alpha \|\varphi\|\|\psi\|.
\end{eqnarray*}
Hence by Lemma \ref{lem T hyp const}, we should have
$$|(\Phi\circ N_{f,h})(e_1-1)|\leq  12\sqrt{\pi(1+\sqrt{2})}\sqrt{\alpha},$$
which by \eqref{Nf}, it implies that
\begin{equation}\label{e_1}
\begin{aligned}
|\varphi(fe_1,h)-\varphi(f,e_1h)| &=&|\Phi(fe_1\otimes h-f\otimes e_1 h)|\\
&\leq& 12\sqrt{\pi(1+\sqrt{2})}\sqrt{\alpha}.
\end{aligned}
\end{equation}
Now we show that
\begin{align}\label{e_n}
|\phi(fe_n,h)-\phi(f,e_nh)|\leq12\sqrt{\pi(1+\sqrt{2})}\sqrt{\alpha} \|f\| \|h\|
\end{align}
for all $f,h\in A(\T)$, where $e_n$ denotes the function in $A(\T)$ defined by
$$e_n(s)=e^{ins} \ \ (s\in \R, \ n\in \mathbb{Z}).$$
For $a\in A(\T)$, let $a_n\in A(\T)$ be the function defined by
$$a_n(x)=a(nx).$$
Note that $e_n=(e_1)_n.$
Define $\tau: A(\T)\times A(\T)\rightarrow \mathbb{C}$ by
$$\tau(a,b)=\phi(fa_n, hb_n) \ \ (a,b\in A(\T)).$$
Note that if $a\in A(\T),$ then $a(s)=\sum_{k=-\infty}^{+\infty}\hat{a}(k)e^{iks},$ hence $a(ns)=\sum_{k=-\infty}^{+\infty}\hat{a}(k)e^{ikns}$ and so $a_n\in A(\T)$ with
$$\|a_n\|\leq\sum_{k=-\infty}^{+\infty}|\hat{a}(k)|= \|a\|.$$
Moreover, if $a,b \in A(\T)$ are such that $\text{supp}\,a\cap \text{supp}\,b=\emptyset$, then it is easily seen that $\text{supp}\,fa_n\cap \text{supp}\,hb_n=\emptyset$. So
\begin{eqnarray*}
|\tau(a,b)|&\leq& \|\phi(fa_n,hb_n)\|\\
&\leq& \alpha \|fa_n\| \|hb_n\|\\
&\leq& \alpha \|a\| \|b\|.
\end{eqnarray*}
From \eqref{e_1}, we deduce that
\begin{align}\label{e1 to en}
|\tau(e_1,1)-\tau(1,e_1)|\leq 12\sqrt{\pi(1+\sqrt{2})}\sqrt{\alpha}.
\end{align}
On the other hand, we have
$$\tau(e_1,1)=\phi(fe_n,h), \ \tau(1,e_1)=\phi(f,e_n h)$$
which, together with \eqref{e1 to en}, gives \eqref{e_n}.\\
Now let $g\in A(\T).$ Since $g=\sum_{k=-\infty}^{+\infty}\hat{g}(k)e_k,$ by applying \eqref{e_n} we get
\begin{eqnarray*}
|\phi(fg,h)-\phi(f,gh)|&=&|\phi(\sum_{k=-\infty}^{+\infty}\hat{g}(k)f e_k,h)-\phi(f,\sum_{k=-\infty}^{+\infty}\hat{g}(k)e_kh)|\\
&\leq&\sum_{-\infty}^{+\infty}|\hat{g}(k)| |\phi(fe_k,h)-\phi(f,e_k h)|\\
&\leq& \sum_{-\infty}^{+\infty}|\hat{g}(k)| 12\sqrt{\pi(1+\sqrt{2})}\sqrt{\alpha}\\
&=&12\sqrt{\pi(1+\sqrt{2})}\sqrt{\alpha} \|g\|.
\end{eqnarray*}
Therefore if $f,h\in A(\T)$ are arbitrary elements, we get
\begin{align}\label{final ineq sqr}
|\phi(fg,h)-\phi(f,gh)|\leq 12\sqrt{\pi(1+\sqrt{2})}\sqrt{\alpha} \|f\| \|g\| \|h\|.
\end{align}
Next, let $m: A(\T\times\T)\rightarrow A(\T)$ be the multiplication map which maps every elementary tensor $f\otimes g\in A(\T\times \T)$ to $fg\in A(\T)$. It follows from \eqref{final ineq sqr} that for $u=\sum_{i=1}^{\infty}f_i\otimes g_i\in A(\T\times\T)$ we can write
\begin{eqnarray*}
|\Phi(u)-\phi(1,m(u))|&=& |\Phi(\sum_{i=1}^{\infty}f_i\otimes g_i-\sum_{i=1}^{\infty}1\otimes f_i g_i)|\\
&\leq& 12\sqrt{\pi(1+\sqrt{2})}\sqrt{\alpha} \sum_{i=1}^{\infty} \|f_i\|\|g_i\|,
\end{eqnarray*}
In particular, for every $u\in I:=\ker\, m,$
$$|\Phi(u)|\leq 12\sqrt{\pi(1+\sqrt{2})}\sqrt{\alpha} \|u\|,$$
implying that
\begin{align}\label{*}
\|\Phi|_I\|\leq 12\sqrt{\pi(1+\sqrt{2})}\sqrt{\alpha}.
\end{align}
Now consider the general case. Let $\phi: A(\T)\times A(\T)\rightarrow \mathbb{C}$ be a continuous bilinear map satisfying \eqref{prop} for some $\alpha>0.$
Without lost of generality, we can assume that $\Phi|_I\neq0.$ Let $\Phi_0\in I^*$ with $\Phi_0=\frac{\Phi|_I}{\|\Phi|_I\|}.$ Then $\|\Phi_0\|=1$. By the Hahn-Banach Theorem, $\Phi_0$ can be extended to $\Psi\in A(\T\times\T)^*$ with $\|\Psi\|=1.$ For $f,g \in A(\T)$ with $\text{supp}\,f\cap \text{supp}\,g=\emptyset$ we have
\begin{eqnarray*}
|\Psi(f\otimes g)|&=&|\Phi_0(f\otimes g)|\\
&=& \frac{1}{\|\Phi|_I\|} |\Phi(f\otimes g)|\\
&\leq& \frac{\alpha}{\|\Phi|_I\|} \|f\| \|g\|.
\end{eqnarray*}
Put $\alpha_0=\frac{\alpha}{\|\Phi|_I\|}.$ Then $\|\Psi\|=1$ and $0\leq\alpha_0\leq 1$ (We can assume $\alpha\leq \|\Phi|_I\|$, otherwise the statement is trivial). By the first part and \eqref{*},
\begin{eqnarray*}
1=\|\Phi_0\|&=&\|\Psi|_I\|\\
&\leq& 12\sqrt{\pi(1+\sqrt{2})}\sqrt{\alpha_0}\\
&=& 12\sqrt{\pi(1+\sqrt{2})}\sqrt{\frac{\alpha}{\|\Phi|_I\|}}.
\end{eqnarray*}
This implies that
$$\|\Phi|_I\|\leq 144\pi(1+\sqrt{2})\alpha.$$
In particular, for every $u\in I$
$$|\Phi(u)|\leq 144\pi(1+\sqrt{2})\alpha \|u\|.$$
Finally for $f,g,h\in A(\T)$, it is clear that $fg\otimes h-f\otimes gh\in I.$ So we can write
\begin{eqnarray*}
|\phi(fg,h)-\phi(f,gh)|&=&|\Phi(fg\otimes h-f\otimes gh)|\\
&\leq& 144\pi(1+\sqrt{2})\alpha \|fg\otimes h-f\otimes gh\|\\
&\leq& 288\pi(1+\sqrt{2})\alpha \|f\|\|g\|\|h\|.
\end{eqnarray*}
\end{proof}

One important application of Theorem \ref{mainn thm for strong (B) constant of A(T)} is
to obtain a constant for the strong property $(\B)$ of C$^*$-algebras and group algebras. The approach we need to use is the same as that provided in \cite{AEV} with a slight modification using our result from the preceding subsection. Hence we do not give a proof to the following theorem and we just refer to \cite[Theorem 3.4]{AEV} and \cite[Theorem 3.5]{AEV} (see also \cite[Lemma 3.2]{AEV}). We highlight that the approach in \cite{AEV} does not give a constant for the strong property $(\B)$, whereas our modification does as the following theorem shows.\\
\begin{thm}\label{C* alg and group alg constant B}
Let $A$ be a C$^*$-algebra or a group algebra. Then $A$ has the strong property $(\B)$ with a constant given by
$$288\pi(1+\sqrt{2}).$$
\end{thm}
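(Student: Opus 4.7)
The plan is to follow the same general scheme as \cite[Theorems 3.4 and 3.5]{AEV}, in which the strong property $(\B)$ for a C$^*$-algebra (respectively a group algebra) is reduced to the corresponding property for $A(\T)$, but now armed with the explicit bound $288\pi(1+\sqrt{2})$ furnished by Theorem \ref{mainn thm for strong (B) constant of A(T)}. The overarching idea is that each reduction in \cite{AEV} is implemented by a \emph{contractive} algebra map, so the constant established at the $A(\T)$ level transfers directly, without being inflated, to yield the same number for C$^*$-algebras and group algebras.

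For the C$^*$-algebra case, the first step is to take a continuous bilinear map $\varphi\from A\times A\to\C$ satisfying $|\varphi(a,b)|\leq\alpha\|a\|\|b\|$ whenever $ab=0$ (by Hahn--Banach, the scalar-valued case is enough). For each unitary $u\in A$ (or its unitization), the continuous functional calculus yields a contractive unital algebra homomorphism $\pi_u\from A(\T)\to A$, $f\mapsto f(u)$, since $A(\T)$ embeds contractively into $C(\T)$ and the $C^*$-functional calculus is isometric on $C(\T)$. Pulling $\varphi$ back through $\pi_u$ in both slots gives a continuous bilinear form $\phi_u\from A(\T)\times A(\T)\to\C$ which inherits the disjoint-support hypothesis with the same parameter $\alpha$, because $\supp f\cap\supp g=\emptyset$ in $A(\T)$ forces $f(u)g(u)=0$ in $A$. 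Theorem \ref{mainn thm for strong (B) constant of A(T)} applied to $\phi_u$ yields the desired inequality with constant $288\pi(1+\sqrt{2})\alpha$ on the image of $\pi_u$. The remainder of the argument, adapted from \cite[Theorem 3.4]{AEV} and \cite[Lemma 3.2]{AEV}, upgrades this to the inequality
$$|\varphi(ab,c)-\varphi(a,bc)|\leq 288\pi(1+\sqrt{2})\alpha\|a\|\|b\|\|c\|$$
for arbitrary triples $a,b,c\in A$, exploiting that the unitaries linearly span a dense subset of $A$ and that every element lies in the $C^*$-subalgebra generated by a single unitary in a way compatible with the bilinear form.

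For the group algebra case $A=L^1(G)$, the analogous reduction from \cite[Theorem 3.5]{AEV} goes through contractive morphisms built from translations by single group elements and their Fourier expansions; these embed $A(\T)$ contractively into $M(G)$, and pulling back $\varphi$ produces a bilinear form on $A(\T)\times A(\T)$ to which Theorem \ref{mainn thm for strong (B) constant of A(T)} again applies with the same constant. The approximation step, also taken from \cite{AEV}, lifts the resulting inequality from the image of the embedding back to all of $L^1(G)$ without magnifying the constant.

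The main obstacle, and what constitutes the ``slight modification'' the author refers to, is the explicit verification that every reduction in \cite[Theorems 3.4 and 3.5]{AEV} is genuinely contractive in all three variables simultaneously. Unlike the qualitative setting of \cite{AEV}, where only boundedness was relevant, here one must ensure that no multiplicative factor appears in any intermediate estimate. The scalar-valued reduction of Definition \ref{def to B strong with constant} together with a careful slot-by-slot tracking through the polarization and approximation arguments of \cite{AEV} should make this bookkeeping routine once Theorem \ref{mainn thm for strong (B) constant of A(T)} is in hand.
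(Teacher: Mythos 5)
Your proposal follows exactly the route the paper itself takes: the paper offers no proof of this theorem, but simply refers to \cite[Theorems 3.4 and 3.5]{AEV} and \cite[Lemma 3.2]{AEV}, observing that those reductions to $A(\T)$ are implemented by contractive homomorphisms (continuous functional calculus at a unitary, resp. $f\mapsto\sum_{n}\hat{f}(n)\delta_{t^n}$ into $M(G)$) and hence transfer the constant of Theorem \ref{mainn thm for strong (B) constant of A(T)} without inflation. The one point to be careful about is your phrase ``the unitaries linearly span a dense subset'': to keep the constant equal to $288\pi(1+\sqrt{2})$ rather than a multiple of it, the passage from unitaries to a general element $b$ in the C$^*$-case must use \emph{convex} combinations of unitaries (Russo--Dye), since writing $b$ as a linear combination of four unitaries would double the bound.
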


\section{A bound for the hyperreflexivity constant}\label{hyper constant c-star and grp}
In this section, we show how the existence of a constant for the strong property $(\B)$ can help us to find an upper bound for the hyperreflexivity constant of the bounded $n$-cocycle spaces. We then apply our result to  C$^*$-algebras and group algebras. We achieve our goal by modifying the approach in \cite{ES-JS} and its main result. We start by stating without proof the following proposition which is a  straightforward modification of \cite[Proposition 3.5]{ES-JS}.

\begin{prop}\label{P:Prop B strong-case n=12}
Let $A$ be a unital Banach algebra having the strong property
$(\B)$ with a constant $r.$\\
$($i$)$ For every right Banach $A$-module $X$ and a bounded operator
$D \from A \to X$ and each $\alpha\geq0$ satisfying
$$ab=0 \Rightarrow \|D(a)b\|\leq \alpha \|b\| \|a\|$$
we have
$$  \|D(ab)c-D(a)bc\|\leq r\alpha \|a\| \|b\| \|c\| \hspace{4mm} (\forall a,b,c \in A).$$
$($ii$)$ For every right Banach $A$-module $X$ and a bounded operator
$D \from A \to X$ and each $\beta\geq 0$ satisfying
$$ab=bc=0 \Rightarrow \|aD(b)c\|\leq \beta \|a\|\|b\|\|c\|$$
we have
$$\|d[D(acb)-aD(cb)-D(ac)b+aD(c)b]e\|\leq r^2\beta \|a\|\|b\|\|c\|\|d\|\|e\| \hspace{4mm} (\forall a,b,c,d,e \in A).$$
\end{prop}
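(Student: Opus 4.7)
The plan is to handle parts (i) and (ii) separately, in each case constructing a bounded bilinear map on $A \times A$ to which the strong property $(\B)$ with constant $r$ can be applied.

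For part (i), the argument should be essentially immediate. I would define $\varphi \from A \times A \to X$ by $\varphi(a,b) := D(a)b$; this is bilinear and bounded, and the hypothesis of (i) says exactly that $\|\varphi(a,b)\| \le \alpha\|a\|\|b\|$ whenever $ab = 0$, so $\alpha(\varphi) \le \alpha$ in the sense of Definition \ref{def to B strong with constant}. Invoking the strong property $(\B)$ with constant $r$ for $\varphi$ then yields
$$\|D(ab)c - D(a)bc\| \;=\; \|\varphi(ab,c) - \varphi(a,bc)\| \;\le\; r\alpha\,\|a\|\|b\|\|c\|,$$
which is exactly the desired conclusion.

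Part (ii) is the real technical content, and the plan is to apply the strong property $(\B)$ twice, which accounts for the exponent $r^2$ in the final bound. The starting observation is the algebraic identity
$$D(acb) - aD(cb) - D(ac)b + aD(c)b \;=\; E_a(cb) - E_a(c)\,b,$$
where $E_a \from A \to X$ is the bounded linear map defined by $E_a(x) := D(ax) - aD(x)$. If I can establish that $E_a$ satisfies the hypothesis of part (i) with constant $r\beta\|a\|$, namely
$$xy = 0 \;\Longrightarrow\; \|E_a(x)\,y\| \;\le\; r\beta\|a\|\|x\|\|y\|,$$
then applying part (i) to $E_a$ with $(c,b,e)$ playing the role of $(a,b,c)$ yields
$$\bigl\|[E_a(cb) - E_a(c)\,b]\,e\bigr\| \;\le\; r^2\beta\|a\|\|b\|\|c\|\|e\|,$$
and pre-multiplying by $d$ on the left produces the required bound $r^2\beta\|a\|\|b\|\|c\|\|d\|\|e\|$.

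The main obstacle is thus producing the displayed estimate on $E_a$ from the (ii) hypothesis, and this is where the first application of the strong property $(\B)$ enters. I plan to set up an auxiliary bounded bilinear map $\psi_a \from A \times A \to X$ (with $a$ held fixed) whose single zero-product condition captures both halves of the double condition $ab = bc = 0$ appearing in (ii), and whose associated $\alpha(\psi_a)$ is dominated by $\beta\|a\|$. Applying the strong property $(\B)$ with constant $r$ to $\psi_a$ will then deliver the required control on $E_a(x)\,y$ for $xy = 0$, with constant $r\beta\|a\|$. The hardest step will be arranging the bilinear map $\psi_a$ so that the two zero conditions fuse into the single condition that strong property $(\B)$ handles; once this is done, the construction and the verification of $\alpha(\psi_a) \le \beta\|a\|$ follow the strategy of \cite[Proposition 3.5]{ES-JS} essentially verbatim, the only novelty being the careful bookkeeping of the constant $r$ at each step.
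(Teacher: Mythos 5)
First, note that the paper does not actually prove this proposition: it is stated without proof as a ``straightforward modification of \cite[Proposition 3.5]{ES-JS}'', so the only meaningful comparison is with that source, whose argument is exactly the two-fold application of the strong property $(\B)$ that you outline. Your part (i) is complete and correct. The skeleton of your part (ii) is also right: the identity $E_a(cb)-E_a(c)b=D(acb)-aD(cb)-D(ac)b+aD(c)b$ with $E_a(x)=D(ax)-aD(x)$, the reduction of the second application of $(\B)$ to part (i) applied to $E_a$ with constant $r\beta\|a\|$, and the harmless left multiplication by $d$ at the end all work and produce the stated constant $r^2\beta$.

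The genuine gap is the step you yourself flag as the main obstacle: the estimate $xy=0\Rightarrow\|E_a(x)y\|\le r\beta\|a\|\,\|x\|\|y\|$ is announced but never established, and the one concrete hint you give --- an auxiliary bilinear map $\psi_a$ ``with $a$ held fixed'' --- points in the wrong direction. No bilinear map whose only parameter is $a$ can have its $\alpha(\psi_a)$ controlled by the hypothesis of (ii), because that hypothesis needs \emph{two} vanishing products and a single zero product of the two free slots cannot supply both; the auxiliary map must instead be built from the fixed zero-product pair. Concretely: for $x,y\in A$ with $xy=0$ define $\varphi_{x,y}(s,t)=sD(tx)y$. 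If $st=0$ then $s(tx)=(st)x=0$ and $(tx)y=t(xy)=0$, so the hypothesis of (ii) gives $\|\varphi_{x,y}(s,t)\|\le\beta\|s\|\,\|tx\|\,\|y\|\le\beta\|x\|\|y\|\,\|s\|\|t\|$, i.e.\ $\alpha(\varphi_{x,y})\le\beta\|x\|\|y\|$. The strong property $(\B)$ with constant $r$ then yields $\|suD(vx)y-sD(uvx)y\|\le r\beta\|x\|\|y\|\,\|s\|\|u\|\|v\|$ for all $s,u,v$, and specializing $s=v=1$, $u=a$ gives exactly $\|aD(x)y-D(ax)y\|\le r\beta\|a\|\|x\|\|y\|$. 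Note that this specialization is where the unitality of $A$ (assumed in the proposition but nowhere invoked in your plan) is actually used; without a unit you would only obtain the three-variable inequality. With this insertion your argument closes and coincides, up to packaging, with the proof in \cite{ES-JS}.
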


Our next step is to improve Proposition \ref{P:Prop B strong-case n=12} $(ii)$ to higher dimensions using the induction as it is demonstrated in the following Theorem. This is a modification of \cite[Theorem 3.6]{ES-JS}, taking into account our concept of the strong property $(\B)$ with a constant. We present the proof for the sake of clarity.

\begin{thm}\label{T:Loc. distance n cocycle2}
Let $A$ be a unital Banach algebra with unit 1 having the strong property
$(\B)$ with a constant $r.$
Suppose that $X$ is a unital Banach $A$-bimodule, $n\in \mathbb{N} $, $T\in  B^n(A,X)$ and let $\gamma\geq 0$ satisfying
$$a_0a_1=a_1a_2=\cdots=a_na_{n+1}=0\Rightarrow \|a_0T(a_1,\ldots,a_n)a_{n+1}\| \leq \gamma \|a_0\|\cdots\|a_{n+1}\|.$$
Also $T(a_1,\ldots,a_n)=0$ if for some $1\leq i \leq n, a_i=1$.
Then
$$\|\delta^n(T)\|\leq 2^{n-1}r^{n+1}\gamma.$$
\end{thm}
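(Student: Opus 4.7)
The plan is to proceed by induction on $n \geq 1$, using Proposition \ref{P:Prop B strong-case n=12}(ii) at the base case and the isomorphism $\Lambda_{n-1}$ from Section \ref{S:Prelimin} at each inductive step.

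\emph{Base case $n=1$.} Here $T \in B(A,X)$ with $T(1) = 0$, and the hypothesis is precisely the input of Proposition \ref{P:Prop B strong-case n=12}(ii) with $\beta = \gamma$. Specialising its conclusion to $c = d = e = 1$ and using $T(1) = 0$ together with the unitality of $X$ collapses the five-term expression to $\|T(ab) - aT(b) - T(a)b\| \leq r^2 \gamma \|a\|\|b\|$, which is $\|\delta^1 T\| \leq r^2 \gamma = 2^0 r^{1+1} \gamma$.

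\emph{Inductive step.} Assume the bound for $n-1$. Given $T \in B^n(A, X)$ satisfying the hypothesis, set $S := \Lambda_{n-1}(T) \in B^{n-1}(A, B(A, X))$, where $B(A, X)$ carries the $\star$-action. Since $T(a_1, \ldots, a_{n-1}, 1) = 0$, we have $S(a_1, \ldots, a_{n-1})(1) = 0$, so $S$ takes values in the $\star$-invariant closed subbimodule $B_0(A, X) := \{D \in B(A, X) : D(1) = 0\}$. A short check shows that on $B_0(A, X)$ the right $\star$-action by $1$ is the identity (since $(D \star 1)(c) = D(c) - D(1)c = D(c)$ when $D(1) = 0$), so $B_0(A, X)$ is a unital Banach $A$-bimodule. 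Once one verifies the transported hypothesis
\begin{equation}\label{Eq:plan-transported}
b_0 b_1 = b_1 b_2 = \cdots = b_{n-1} b_n = 0 \Longrightarrow \|b_0 \star S(b_1, \ldots, b_{n-1}) \star b_n\|_{B_0} \leq 2 r \gamma \|b_0\|\cdots\|b_n\|,
\end{equation}
together with the immediate fact that $S(a_1, \ldots, a_{n-1}) = 0$ whenever some $a_i = 1$, the inductive hypothesis applied to $S$ gives $\|\delta^{n-1}_{B_0} S\| \leq 2^{n-2} r^n (2 r \gamma) = 2^{n-1} r^{n+1} \gamma$. Combining with \eqref{Eq:connecting map-higher to lower} and the fact that $\Lambda_n$ is an isometric $A$-bimodule isomorphism then delivers $\|\delta^n T\| \leq 2^{n-1} r^{n+1} \gamma$.

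\emph{Main obstacle.} Establishing \eqref{Eq:plan-transported} is the technical heart. Unpacking the $\star$-actions, it amounts to bounding $\|b_0 T(b_1, \ldots, b_{n-1}, b_n c) - b_0 T(b_1, \ldots, b_{n-1}, b_n) c\|$ uniformly in $c \in A$. Neither term can be controlled by the hypothesis on $T$ directly, because the auxiliary constraint $b_n c = 0$ is not available for arbitrary $c$. The plan is to fix $b_0, \ldots, b_{n-1}$ satisfying the first $n-1$ null-product conditions, introduce the bounded bilinear map $\varphi : A \times A \to X$, $\varphi(x, y) = b_0 T(b_1, \ldots, b_{n-1}, x) y$, and apply the strong property $(\B)$ to $\varphi$: the condition $xy = 0$, combined with $b_{n-1} b_n = 0$, triggers the hypothesis on $T$, and the strong-$(\B)$ conclusion $\|\varphi(xy, z) - \varphi(x, yz)\| \leq r \alpha \|x\|\|y\|\|z\|$ with $\alpha = \gamma \|b_0\|\cdots\|b_{n-1}\|$, specialised at $(x, y, z) = (b_n, c, 1)$, is exactly the required estimate with a factor $r$; the factor $2$ then arises from combining the two terms in the $\star$-action. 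The delicate point — and the place where the argument must closely mirror \cite{ES-JS} — is absorbing the ancillary constraint $b_{n-1} x = 0$ (present in the hypothesis on $T$ but absent from the strong-$(\B)$ input) into the bilinear form without incurring an extra constant.
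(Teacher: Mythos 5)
Your overall skeleton is sound and close to the paper's: both proceed by induction, use Proposition \ref{P:Prop B strong-case n=12}(ii) for the base case $n=1$, and reduce degree via the isometry $\Lambda$. (The paper lands in the quotient module $B(A,X)/B_A(A,X)$ to gain unitality, while you use the submodule $\{D\in B(A,X):D(1)=0\}$; your choice is legitimate and would in fact avoid the factor $2$ the paper pays when converting distance-to-left-multipliers back into a norm.) But the technical heart of your argument --- the transported sandwich estimate $\|b_0\star\Lambda_{n-1}(T)(b_1,\dots,b_{n-1})\star b_n\|\le 2r\gamma\|b_0\|\cdots\|b_n\|$ --- is not established, and the mechanism you propose for it does not work. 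Applying the strong property $(\B)$ to $\varphi(x,y)=b_0T(b_1,\dots,b_{n-1},x)y$ requires the bound $\|\varphi(x,y)\|\le\alpha\|x\|\|y\|$ for \emph{all} $x,y\in A$ with $xy=0$; but the hypothesis on $T$ only yields such a bound when additionally $b_{n-1}x=0$, a condition that fails for generic $x$. You flag this yourself as ``the delicate point,'' but flagging it is not resolving it, and as stated the step fails.

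The repair is precisely the paper's device, and it replaces your bilinear map by a linear operator. Fix $b_0,\dots,b_n$ with $b_0b_1=\cdots=b_{n-1}b_n=0$ and set $D_0(c)=\bigl(b_0\star\Lambda_{n-1}(T)(b_1,\dots,b_{n-1})\star b_n\bigr)(c)=b_0T(b_1,\dots,b_{n-1},b_nc)-b_0T(b_1,\dots,b_{n-1},b_n)c$. If $cd=0$ then $D_0(c)d=b_0T(b_1,\dots,b_{n-1},b_nc)d$, and now every needed orthogonality is automatic: $b_{n-1}(b_nc)=(b_{n-1}b_n)c=0$ and $(b_nc)d=b_n(cd)=0$. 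Hence $\|D_0(c)d\|\le\gamma\|b_0\|\cdots\|b_n\|\,\|c\|\,\|d\|$, and Proposition \ref{P:Prop B strong-case n=12}(i) together with $D_0(1)=0$ gives $\|D_0\|\le r\gamma\|b_0\|\cdots\|b_n\|$ (even without your factor $2$). The crucial point, which your formulation misses, is that the free variable must enter the last slot of $T$ only after being multiplied by $b_n$, so that the ancillary constraint is inherited from $b_{n-1}b_n=0$ rather than imposed. With this substitution the rest of your induction goes through.
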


\begin{proof}
We prove the statement by induction on $n$. For $n=1$, the result follows from
Proposition \ref{P:Prop B strong-case n=12}(ii) together with the fact that $X$ is unital
and $T(1)=0$.

Now suppose that the result is true for $n\in \N$. We prove it for $n+1$.
Consider $T\in B^{n+1}(A,X)$ and $ \gamma\geq0$ satisfying
$$a_0a_1=a_1a_2=\cdots=a_{n+1}a_{n+2}=0\Rightarrow||a_0T(a_1,\ldots, a_{n+1})a_{n+2}\|\leq \gamma \|a_0\|\cdots\|a_{n+2}\|.$$
Also $T(a_1,\ldots,a_{n+1})=0$ if for some $1\leq i \leq n+1, a_i=1$. Take $a_i\in A$, $i=0,\ldots, n+1$
with $a_0a_1=a_1a_2=\cdots=a_{n}a_{n+1}=0$.
We first show that
\begin{align}\label{Eq:Prop B strong-1}
\|a_0\star\Lambda_n(T)(a_1,\ldots,a_{n})\star a_{n+1}\|\leq r\gamma \|a_0\|\cdots\|a_{n+1}\|.
\end{align}
First suppose that $\|a_0\|=\cdots=\|a_{n+1}\|=1$, and let $$S=a_0\star\Lambda_n(T)(a_1,\ldots,a_n)\star a_{n+1}.$$
For every $b,c\in A$ with $bc=0$, we have
\begin{eqnarray*}
S(b)c&=&[a_0\star\Lambda_n(T)(a_1,\ldots,a_n)\star a_{n+1}](b)c\\
&=&a_0 \Lambda_n(T)(a_1,\ldots,a_n)(a_{n+1}b)c-a_0 \Lambda_n(T)(a_1,\ldots,a_n)(a_{n+1})bc\\
&=&a_0T(a_1,\ldots,a_n,a_{n+1}b)c.
\end{eqnarray*}
But $a_0a_1=\cdots=a_n(a_{n+1}b)=(a_{n+1}b)c=0$. Thus, by our hypothesis
$$\|a_0T(a_1,\ldots,a_n,a_{n+1}b)c\|\leq \gamma \|a_0\|\cdots\|a_{n+1}b\|\|c\|\leq \gamma\|b\|\|c\|$$
implying that $\|S(b)c\|\leq \gamma  \|b\|\|c\|$. Hence, by Proposition \ref{P:Prop B strong-case n=12}(i), we get
\begin{align}\label{Eq:Prop B strong-2}
\|S(bc)-S(b)c\|\leq r\gamma \|b\| \|c\| \hspace{4mm} (\forall b,c\in A).
\end{align}
On the other hand,
\begin{eqnarray*}
S(1)&=&(a_0\star\Lambda_n(T)(a_1,\ldots,a_n)\star a_{n+1})(1)\\
&=&a_0\Lambda_n(T)(a_1,\ldots,a_n)(a_{n+1}1)-a_0\Lambda_n(T)(a_1,\ldots,a_n)(a_{n+1})1\\
&=&0.
\end{eqnarray*}
Putting $b=1$ in \eqref{Eq:Prop B strong-2}, we can write
$$\|S(c)\|\leq r\gamma \|c\| \hspace{4mm} ( c\in A)$$
or equivalently,
\begin{align}\label{Eq:Prop B strong-3}
\|S\|=\|a_0\star\Lambda_n(T)(a_1,\ldots,a_n)\star a_{n+1}\|\leq r\gamma.
\end{align}
Now consider the general case. If for some $0\leq i\leq n+1$, $a_i=0$, then we clearly have
$$\|a_0\star\Lambda_n(T)(a_1,\ldots,a_n)\star a_{n+1}\|\leq r\gamma \|a_0\|\ldots\|a_{n+1}\|.$$
Now suppose that for all $0\leq i\leq n+1$, $a_i\neq 0$. Then
$$\frac{a_0}{\|a_0\|}\frac{a_1}{\|a_1\|}=\cdots=\frac{a_{n+1}}{\|a_{n+1}\|}\frac{a_{n+2}}{\|a_{n+2}\|}=0,$$
and so, by \eqref{Eq:Prop B strong-3},
$$\|\frac{a_0}{\|a_0\|}\star\Lambda_n(T)(\frac{a_1}{\|a_1\|},\ldots,\frac{a_n}{\|a_n\|})\star \frac{a_{n+1}}{\|a_{n+1}\|}\|\leq r\gamma ,$$
implying that \eqref{Eq:Prop B strong-1} holds.

Now let $B_A(A,X)$ denote the space of all left multipliers from $A$ into $X$ and suppose that
$q: B(A,X)\to \displaystyle \frac{B(A,X)}{B_A(A,X)}$ is the natural quotient mapping.
It is straightforward to verify that $\displaystyle \frac{B(A,X)}{B_A(A,X)}$ is a unital Banach $A$-bimodule and
$q$ is an $A$-bimodule morphism. Thus, by \eqref{Eq:Prop B strong-1},
\begin{eqnarray*}
\|a_0\star q(\Lambda_n(T)(a_1,\ldots,a_n))\star a_{n+1}\|&=&\| q(a_0\star\Lambda_n(T)(a_1,\ldots,a_n)\star a_{n+1})\| \\
&\leq&\|q\|\| a_0\star\Lambda_n(T)(a_1,\ldots,a_n)\star a_{n+1}\|\\
&\leq& r\gamma \|a_0\|\ldots\|a_{n+1}\|.
\end{eqnarray*}
Moreover, if for some $i,1\leq i\leq n$, $a_i=1$, then for every $a\in A$,
$$ \Lambda_n(T)(a_1,\ldots,a_n)(a)=T(a_1,\ldots,a_n,a)=0.$$
This shows that $q\circ \Lambda_n(T)(a_1,\ldots,a_n)=0$ if for some $1\leq i\leq n$, $a_i=1$.
Now using the assumption of the induction, we have
\begin{equation}\label{Eq:Prop B strong-4}
\begin{aligned}
\|\Delta_q^n(q\circ \Lambda_n(T))(a_1,\ldots,a_{n+1})\|&\leq& (2^{n-1}r^{n+1})(r\gamma)\|a_1\|\cdots\|a_{n+1}\|\\
&\leq& 2^{n-1}r^{n+2}\gamma \|a_1\|\cdots\|a_{n+1}\|
\end{aligned}
\end{equation}
where $\Delta_q^n: \displaystyle B^{n} (A,\frac{B(A,X)}{B_A(A,X)})\rightarrow  B^{n+1}(A,\frac{B(A,X)}{B_A(A,X)})$ is the corresponding connecting map in Definition \ref{connecting maps}.
On the other hand, since $q$ is a Banach $A$-bimodule morphism, it is easy to check that for all $a_0,\ldots,a_{n+1}\in A$,
\begin{eqnarray*}
\Delta_q^n(q\circ \Lambda_n(T))(a_1,\ldots,a_{n+1}) &=&  q(\Delta^n(\Lambda_n(T))(a_1,\ldots,a_{n+1})) \\
&=& q(\Lambda_{n+1}(\delta^{n+1}(T))(a_1,\ldots,a_{n+1})).
\end{eqnarray*}
Hence, by \eqref{Eq:Prop B strong-4}
\begin{align*}
\|q(\Lambda_{n+1}(\delta^{n+1}(T))(a_1,\ldots,a_{n+1}))\|\leq 2^{n-1}r^{n+2}\gamma\|a_1\|\cdots\|a_{n+1}\|
\end{align*}
implying that for $S=\Lambda_{n+1}(\delta^{n+1}(T))(a_1,\ldots,a_{n+1})$,
$$\|\dist(S, B_A(A,X))\| \leq 2^{n-1}r^{n+2}\gamma\|a_1\|\cdots\|a_{n+1}\|. $$
So for every $a\in A$, we have
\begin{equation}\label{Eq:Prop B strong-5}
\begin{aligned}
\|S(a)-S(1)a\|&\leq& 2[2^{n-1}r^{n+2}\gamma\|a_1\|\cdots\|a_{n+1}\|\|a\|]\\
&\leq& 2^{n}r^{n+2}\gamma\|a_1\|\cdots\|a_{n+1}\|\|a\|.
\end{aligned}
\end{equation}
On the other hand,
\begin{eqnarray*}
S(1)&=&\Lambda_{n+1}(\delta^{n+1}(T))(a_1,\ldots,a_{n+1})(1)\\
&=&\delta^{n+1}(T)(a_1,\ldots,a_{n+1},1)\\
&=&a_1T(a_2,\ldots,a_{n+1},1)+\sum_{j=0}^{n-1}(-1)^jT(a_1,\ldots,a_j a_{j+1},\ldots,a_{n+1},1)+(-1)^{n}T(a_1,\ldots,a_{n+1}1)\\
&+&(-1)^{n+1}T(a_1,\ldots,a_{n+1})1\\
&=& 0.
\end{eqnarray*}
Therefore by putting $a=a_{n+2}$ in \eqref{Eq:Prop B strong-5}, we have
\begin{eqnarray*}
\|\delta^{n+1}(T)(a_1,\ldots,a_{n+2})\|&=&\|\Lambda_{n+1}(\delta^{n+1}(T))(a_1,\ldots,a_{n+1})(a_{n+2}) \\
&=& \|S(a_{n+2})\| \\
&=&\|S(a_{n+2})-S(1)a_{n+2}\|\\
&\leq& 2^{n}r^{n+2}\gamma\|a_1\|\ldots\|a_{n+2}\|.
\end{eqnarray*}
This completes the proof.

\end{proof}

Now we are ready to give the main result of this section.

\begin{thm}\label{T:unit prop B strong-vanish coho-n hyperref2}
Let $A$ be a Banach algebra having b.l.u. and the strong property $(\B)$ with a constant $r$. Let $M$ be a bound for the local units of $A$. Let $n\in \N$, suppose that $X$ is a Banach $A$-bimodule such that
$\Hc^{n+1}(A,X)$ is a Banach space. Then for each $T\in B^n(A,X),$ we have
 $$\dist(T,\Zc^n(A,X))\leq C2^{n-1}(M^2r+(M+1)^2)^{n+1}dist_r(T,\Zc^n(A,X))$$
where $C$ is a constant satisfying
\begin{align}\label{inequality to constant by open mapping thm}
\dist(T, Z^n(A,X))\leq C\|\delta^n(T)\|, \ \ \ (T\in B^n(Z,X)).
\end{align}
 \end{thm}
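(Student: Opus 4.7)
The plan is to reduce to the setting of Theorem \ref{T:Loc. distance n cocycle2} by unitizing $A$ and then invoke the open-mapping constant $C$. Set $\gamma := \dist_r(T, \Zc^n(A, X))$, so that the local estimate
$$\|a_0 T(a_1, \ldots, a_n) a_{n+1}\| \leq \gamma \|a_0\|\cdots\|a_{n+1}\|$$
holds whenever $a_0 a_1 = a_1 a_2 = \cdots = a_n a_{n+1} = 0$ in $A$. This is exactly the vanishing-product hypothesis of Theorem \ref{T:Loc. distance n cocycle2}, but that result additionally requires $A$ unital and $T$ to vanish whenever one argument equals $1$.

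To satisfy the two missing requirements, I would form the unitization $A^{\#} = A \oplus \C 1$ with the $\ell^1$-norm, extend $X$ to a unital $A^{\#}$-bimodule by letting $1$ act as the identity, and extend $T$ to $T^{\#} \in B^n(A^{\#}, X)$ by
$$T^{\#}(a_1 + \lambda_1 1, \ldots, a_n + \lambda_n 1) := T(a_1, \ldots, a_n).$$
This extension is $n$-linear, satisfies $\|T^{\#}\| \leq \|T\|$, and vanishes whenever some argument equals $1$, so the unital-algebra, unital-bimodule, and normalization hypotheses of Theorem \ref{T:Loc. distance n cocycle2} are all met.

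The critical step is then verifying the vanishing-product hypothesis for $T^{\#}$ on $A^{\#}$. Taking $x_i = a_i + \lambda_i 1 \in A^{\#}$ with $x_i x_{i+1} = 0$ and expanding
$$x_0 T^{\#}(x_1, \ldots, x_n) x_{n+1} = (a_0 + \lambda_0 1)\, T(a_1, \ldots, a_n)\, (a_{n+1} + \lambda_{n+1} 1),$$
I would do a case analysis on which of the $\lambda_i$ vanish. When all $\lambda_i = 0$ the hypothesis reduces to the one given in $A$. Otherwise, each nonvanishing $\lambda_i 1$ is replaced by $\lambda_i e$ for a suitable local unit $e$ of norm at most $M$, converting the $A^{\#}$-level zero-product relations back into $A$-level ones and introducing factors of $M$. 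Applying the strong property $(\B)$ once per inductive level yields an additional factor of $r$; tracking these carefully, each inductive step in the proof of Theorem \ref{T:Loc. distance n cocycle2} now contributes $M^2 r + (M+1)^2$ rather than just $r$, where $(M+1)^2$ accounts for the worst-case norm expansion in the two outer slots $x_0, x_{n+1}$ and $M^2 r$ accounts for the local-unit absorption combined with one use of $(\B)$. After $n+1$ such inductive levels one obtains
$$\|\delta^n T\|_{B^{n+1}(A, X)} \leq \|\delta^n T^{\#}\|_{B^{n+1}(A^{\#}, X)} \leq 2^{n-1} (M^2 r + (M+1)^2)^{n+1} \gamma.$$
Finally, since $\Hc^{n+1}(A, X)$ is a Banach space, $\delta^n(B^n(A, X))$ is closed in $\Zc^{n+1}(A, X)$, so the open mapping theorem yields the constant $C > 0$ with $\dist(T, \Zc^n(A, X)) \leq C \|\delta^n T\|$, and combining the two estimates produces the claimed bound.

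The hard part will be the case analysis in the third step: preserving the specific combination $M^2 r + (M+1)^2$ at every inductive level, rather than allowing the constant to grow multiplicatively across the unitization, the normalization, and the use of $(\B)$. Getting the combinatorics right so that each inductive step contributes exactly this single factor, and no larger or $n$-dependent expression, is the essence of the quantitative refinement over the qualitative version in \cite{ES-JS}.
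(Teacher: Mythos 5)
Your architecture matches the paper's: unitize $A$, extend $T$ to $\sigma(T)=T^{\#}$ on $A^{\sharp}$, apply Theorem \ref{T:Loc. distance n cocycle2} to the unital algebra with the modified constant, and finish with $\|\delta^n T\|\leq\|\delta^{\sharp n}(\sigma(T))\|$ and the open mapping theorem. But your handling of the constants in the ``critical step'' has a genuine quantitative gap. The paper does \emph{not} verify the vanishing-product hypothesis for $T^{\#}$ on $A^{\sharp}$ by substituting local units for $1$; it invokes \cite[Lemma 3.7]{ES-JS}, which gives
$$\|a_0\sigma(T)(a_1,\ldots,a_n)a_{n+1}\|\leq \dist_r(T,\Zc^n(A,X))\,\|a_0\|\cdots\|a_{n+1}\|$$
for alternating zero products in $A^{\sharp}$ with the \emph{same} constant $\gamma=\dist_r(T,\Zc^n(A,X))$ and no factors of $M$. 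The entire contribution of the local units is concentrated in a separate, prior fact: \cite[Theorem 5.3]{ES-JS} shows that $A^{\sharp}$ itself has the strong property $(\B)$ with constant $M^2r+(M+1)^2$. Once you have that, Theorem \ref{T:Loc. distance n cocycle2} applies to $A^{\sharp}$ as a black box with $r$ replaced by $M^2r+(M+1)^2$, and no re-examination of its induction is needed.

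Your plan, as written, both inserts $M$-factors into the local estimate for $T^{\#}$ (via the case analysis replacing $\lambda_i 1$ by $\lambda_i e$) \emph{and} charges $M^2r+(M+1)^2$ to each inductive level. That double-counts: if the base estimate degrades from $\gamma$ to $M^k\gamma$ for some $k\geq 1$, the final bound becomes $2^{n-1}(M^2r+(M+1)^2)^{n+1}M^k\gamma$, which exceeds the claimed constant. Since the explicit constant is the whole point of the theorem, this is not a cosmetic issue. The fix is to separate the two roles cleanly: prove (or cite) that the zero-product estimate passes to $A^{\sharp}$ with constant exactly $\gamma$ --- this uses that bounded $n$-cocycles annihilate alternating zero products, not local units --- and confine the local-unit argument to establishing the strong property $(\B)$ constant of $A^{\sharp}$, after which the induction of Theorem \ref{T:Loc. distance n cocycle2} requires no modification at all.
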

\begin{proof}
Let $T\in B^n(A,X)$. By \cite[Lemma 3.7]{ES-JS},
for every $a_i \in A^\sharp$, $i=0,\ldots,n+1$ with $a_0a_1=\cdots=a_na_{n+1}=0$, we have
$$\|a_0\sigma(T)(a_1,\ldots,a_n)a_{n+1}\| \leq dist_r(T,\Zc^n(A,X)) \|a_1\|\cdots \|a_{n+1}\|$$
where $\sigma(T):A^{\sharp}\rightarrow X$ is defined by
$$\sigma(T)(b_1+\lambda_1,\ldots,b_n+\lambda_n)=T(b_1,\ldots,b_n) \ \ \ (b_i\in A,\ \lambda_i\in \mathbb{C}).$$
On the other hand, if for some $1\leq i\leq n, \ \ a_i=1$, then
$$\sigma(T)(a_1,\ldots,a_n)=0.$$
If we apply \cite[Theorem 5.3]{ES-JS}, we find that $A^{\sharp}$ has the strong property $(\B)$ with a constant given by,
$$M^2r+(M+1)^2.$$
Hence we can use Theorem \ref{T:Loc. distance n cocycle2} to write
\begin{align}\label{delta-distr}
\|\delta^{\sharp n}(\sigma(T))\|\leq 2^{n-1}(M^2r+(M+1)^2)^{n+1}\dist_r(T,\Zc^n(A,X)).
\end{align}

Now since $\mathcal{H}^{n+1} (A,X)$ is a Banach space, $\text{Im} \delta^n$ is closed. Hence, by the open mapping theorem, there is a constant
$C>0$ such that for each $T\in B^n(A,X),$
\begin{align}\label{dist-delta n}
\dist(T, \Zc^n(A,X))\leq C \|\delta^n (T)\|.
\end{align}
It is straightforward to check that
\begin{align}\label{delta-delta sharp}
\|\delta^n (T)\|\leq \|\delta^{\sharp n} (\sigma(T))\|.
\end{align}
 Hence putting \eqref{delta-distr}, \eqref{dist-delta n} and \eqref{delta-delta sharp} together we get
 \begin{align}\label{dist to distr}
\dist(T, \Zc^n(A,X))\leq C 2^{n-1}(M^2r+(M+1)^2)^{n+1}\dist_r(T,\Zc^n(A,X)),
\end{align}
as desired.
\end{proof}

\subsection{C$^*$-algebras and group algebras}

We showed in Section \ref{constant for the strong property (B)circle} that every C$^*$-algebra and group algebra has the strong property $(\B)$ with a constant. On account of Theorem \ref{T:unit prop B strong-vanish coho-n hyperref2}, this enables us to obtain an upper bound for the hyperreflexivity constant of the bounded $n$-cocycle spaces of C$^*$-algebras and group algebras. First we need to introduce the notion of amenability constant.
\begin{defn}
Let $A$ be a Banach algebra. The amenability constant of $A$, which we denote
by $AM(A)$, is
$$\inf\{\sup_{\alpha}\|\mu_{\alpha}\| : (\mu_{\alpha})_{\alpha} \ \text{ is a bounded approximate diagonal for $A$}\}$$
where we define the infimum of the empty set to be $+\infty$. Hence $A$ is amenable if and only if $AM(A)<\infty$.
\end{defn}
\begin{rem}\label{amen const for group alg and C*}
$(i)$ Let $G$ be a locally compact amenable group. Then $AM(L^1(G))=1$ (see \cite[Corollary 1. 10]{Stokke}).\\
$(ii)$ Let $A$ be an amenable C$^*$-algebra. Then $AM(A)=1.$
\end{rem}
\begin{rem}\label{C and amen const}
Let $A$ be an amenable Banach algebra and suppose that $X$ is a dual Banach $A$-bimodule. Then $C\leq AM(A)$ where $C$ is the constant given in \eqref{inequality to constant by open mapping thm} (see \cite{AEV2}).
\end{rem}
\begin{thm}\label{hyper constant for C* and group alg}
Suppose that $A$ is a C$^*$-algebra or the group algebra of a group with an open subgroup of polynomial growth. Let $n\in \N$, and let $X$ be a Banach $A$-bimodule such that
$\Hc^{n+1}(A,X)$ is a Banach space. Then $\Zc^n(A,X)$ is hyperreflexive with a constant bounded by
$$C 2^{n-1}(288\pi M^2(1+\sqrt{2})+(M+1)^2)^{n+1}$$
where $M$ is a bound for the local units of $A$ and $C$ is a constant satisfying in \eqref{inequality to constant by open mapping thm}. In particular, we have\\
$(i)$ If $A$ is a C$^*$-algebra or the group algebra of a discrete group, then $M=1$.\\
$(ii)$ In the case where $A$ is amenable and $X$ is the dual of a Banach $A$-bimodule, we can assume that $C=1$.
\end{thm}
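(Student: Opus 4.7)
The plan is to assemble the two main results already established in this paper. Theorem \ref{C* alg and group alg constant B} supplies the numerical value $r = 288\pi(1+\sqrt{2})$ for the strong property $(\B)$ constant of $A$, while Theorem \ref{T:unit prop B strong-vanish coho-n hyperref2} converts that constant, together with a bound $M$ on the local units, into a bound on the hyperreflexivity constant of $\Zc^n(A,X)$. Essentially all the substantive work is behind us; the argument should reduce to verifying hypotheses and substituting numbers.

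First I would check that $A$ admits bounded local units. For C$^*$-algebras this follows from the usual functional calculus: given a finite family of elements, one forms a positive element dominating them and constructs a positive contraction acting as an identity on all of them, giving $M = 1$. For $L^1(G)$ with $G$ possessing an open subgroup of polynomial growth, the existence of bounded local units is a standard fact repeatedly exploited in the references mentioned in the introduction (in particular \cite{E.S1,AEV3}). Combined with Theorem \ref{C* alg and group alg constant B}, this places $A$ squarely within the hypotheses of Theorem \ref{T:unit prop B strong-vanish coho-n hyperref2}.

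Applying Theorem \ref{T:unit prop B strong-vanish coho-n hyperref2} with $r = 288\pi(1+\sqrt{2})$ then yields directly, for every $T \in B^n(A,X)$,
$$\dist(T,\Zc^n(A,X))\leq C\, 2^{n-1}\bigl(288\pi M^2(1+\sqrt{2})+(M+1)^2\bigr)^{n+1}\, \dist_r(T,\Zc^n(A,X)),$$
which is precisely the claimed hyperreflexivity estimate. Here $C$ is produced by the open mapping theorem applied to $\delta^n$, using the hypothesis that $\Hc^{n+1}(A,X)$ is a Banach space to guarantee that $\im \delta^n$ is closed.

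For the final assertions, claim (i) is immediate: for C$^*$-algebras the functional calculus argument above gives $M = 1$, and for a discrete group $G$ the algebra $\ell^1(G)$ is unital with unit $\delta_e$ of norm one. Claim (ii) follows by combining Remark \ref{amen const for group alg and C*} with Remark \ref{C and amen const}: amenability forces $AM(A) = 1$, and when $X$ is the dual of a Banach $A$-bimodule one may take $C \leq AM(A) = 1$. The only step that carries any residual delicacy is the verification of bounded local units for non-discrete group algebras under the polynomial-growth hypothesis; every other ingredient is assembled mechanically from the theorems proved earlier in this paper.
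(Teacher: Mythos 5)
Your proposal is correct and follows essentially the same route as the paper: combine Theorem \ref{C* alg and group alg constant B} (giving $r=288\pi(1+\sqrt{2})$) with Theorem \ref{T:unit prop B strong-vanish coho-n hyperref2}, then obtain $(i)$ from the unit bound for C$^*$-algebras (the paper cites \cite[Proposition 6.1]{ES-JS} where you sketch the functional-calculus argument) and the unitality of discrete group algebras, and $(ii)$ from Remarks \ref{amen const for group alg and C*} and \ref{C and amen const}.
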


\begin{proof}
The main statement follows if we combine Theorem \ref{C* alg and group alg constant B} and Theorem \ref{T:unit prop B strong-vanish coho-n hyperref2}. To prove $(i)$, note that the group algebra of a discrete group is unital. Moreover, Proposition \cite[Proposition 6.1]{ES-JS} shows that local units of a C$^*$-algebra are bounded by 1. Finally, $(ii)$ follows if we apply Remark \ref{amen const for group alg and C*} and Remark \ref{C and amen const}.
\end{proof}

\subsection{Convolution operators}
For a locally compact group $G$ and $1<p<\infty$, we recall that an operator $T\in B(L^p(G))$ is
a {\it convolution operator} if for every $t\in G$ and $f\in L^p(G)$, $T(\delta_t*f)=\delta_t*T(f)$.
The space of all convolution operators on $L^p(G)$ is denoted by $CV_p(G)$. It is straightforward to check
that $CV_p(G)$ is a $w^*$-closed subalgebra of $B(L^p(G))\cong (L^p(G)\widehat{\otimes} L^q(G))^*$, where $q$ is the conjugate of $p$.

In this section, we discuss reflexivity and hyperreflexivity of $CV_p(G)$. For the latter case, our method once again relies on the concept of the strong property $(\B)$ with a constant applied to group algebras.
But first, we need the following theorem which is the generalization of a classical result of
E. Christensen in \cite{Christen} for $C^*$-algebras. We recall that for a Banach algebra $A$, a Banach
$A$-bimodule $X$, and $x\in X$, the inner derivation $\delta_x:A \to X$ is definded by
$\delta_x(a)=a\cdot x-x\cdot a$.

\begin{thm}\label{rep-hyper}
Let $A$ be a Banach algebra with an approximate identity bounded by $K$ and having the strong property $(\B)$ with constant $C$, $X$ a Banach space, and $\pi:A\rightarrow B(X) $ a continuous non-degenerate representation. If $\Hc^1(A,B(X))$ is a Banach space, then $\pi(A)'$, the commutant of $\pi(A)$, is hyperreflexive and its hyperreflexivity constant is bounded by $MCK^2\|\pi\|^2$, where $M$ is a constant satisfying in
\begin{align}\label{open map const}
dist(L, \pi(A)')\leq M\|\delta_L\|, \ \ \ L\in B(X).
\end{align}
\end{thm}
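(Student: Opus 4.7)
The strategy is to exploit the standard correspondence between the commutant $\pi(A)'$ and the kernel of the inner-derivation map. For any $L \in B(X)$, the map $\delta_L \colon A \to B(X)$, $a \mapsto \pi(a)L - L\pi(a)$, is a bounded derivation into the Banach $A$-bimodule $B(X)$ (with actions induced by $\pi$), and $L \in \pi(A)'$ if and only if $\delta_L = 0$. Since $\Hc^1(A,B(X))$ is Banach, the range of $\delta^0 \colon B(X) \to \Zc^1(A,B(X))$ is closed, so the hypothesis $\dist(L,\pi(A)') \leq M\|\delta_L\|$ is precisely what the open mapping theorem provides. Consequently, the entire proof reduces to establishing
$$\|\delta_L\| \leq CK^2\|\pi\|^2\, \alpha(L),$$
where $\alpha(L)$ denotes the hyperreflexivity distance of $L$ from $\pi(A)'$ (for instance, the supremum of $\|PLQ\|$ over $P,Q \in B(X)$ of norm at most $1$ with $P\pi(A)'Q = \{0\}$, or any equivalent formulation using elementary functionals).

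To obtain this bound, I would associate to $L$ the bounded bilinear map $\varphi_L \colon A \times A \to B(X)$ defined by $\varphi_L(a,b) = \pi(a)L\pi(b)$. The key observation is that if $ab = 0$ and $T \in \pi(A)'$, then $T$ commutes with $\pi(b)$, so $\pi(a)T\pi(b) = \pi(a)\pi(b)T = \pi(ab)T = 0$. Hence $\varphi_L(a,b) = \pi(a)(L-T)\pi(b)$ for every $T \in \pi(A)'$, which, in conjunction with the definition of $\alpha(L)$, yields the local estimate
$$\|\varphi_L(a,b)\| \leq \|\pi\|^2\, \alpha(L)\, \|a\|\|b\| \qquad (ab=0, \ a,b \in A).$$

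Applying the strong property $(\B)$ of $A$ with constant $C$ to $\varphi_L$ gives, for all $a,b,c \in A$,
$$\|\varphi_L(ab,c) - \varphi_L(a,bc)\| \leq C\|\pi\|^2\, \alpha(L)\, \|a\|\|b\|\|c\|.$$
A direct expansion of the left-hand side produces $\pi(a)\pi(b)L\pi(c) - \pi(a)L\pi(b)\pi(c) = \pi(a)\delta_L(b)\pi(c)$, so
$$\|\pi(a)\delta_L(b)\pi(c)\| \leq C\|\pi\|^2\, \alpha(L)\, \|a\|\|b\|\|c\|.$$
Substituting $a = e_\lambda$ and $c = e_\mu$ from the bounded approximate identity (with $\|e_\lambda\|, \|e_\mu\| \leq K$) and invoking non-degeneracy of $\pi$ (so that $\pi(e_\lambda)y \to y$ in norm for every $y \in X$), I would pass to the limit at an arbitrary vector: apply $\pi(e_\mu)$ to $x \in X$ first, then $\pi(e_\lambda)$ to $\delta_L(b)\pi(e_\mu)x$, which upgrades the estimate to $\|\delta_L(b)\| \leq CK^2\|\pi\|^2\, \alpha(L)\|b\|$. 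Combining this with $\dist(L,\pi(A)') \leq M\|\delta_L\|$ completes the proof.

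The main technical obstacle I anticipate is the careful execution of the two-sided limit in the last step: the uniform norm bound for $\pi(e_\lambda)\delta_L(b)\pi(e_\mu)$ must be transferred to a genuine norm bound for $\delta_L(b)$. This is routine once one knows $\pi(e_\lambda) \to I$ in the strong operator topology, but some care is needed to write out the pointwise limit so that no extra multiplicative factor is introduced. A secondary point is fixing a precise definition of the hyperreflexivity distance $\alpha(L)$ compatible with both the bilinear estimate on $\varphi_L(a,b)$ and the final statement of the theorem; as indicated above, any of the standard formulations in terms of rank-one slicing or of pairs $(P,Q)$ annihilating $\pi(A)'$ works identically.
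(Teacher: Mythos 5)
Your argument is essentially the paper's proof: the same zero-product bilinear map $(a,b)\mapsto\pi(a)L\pi(b)$, the same application of the strong property $(\B)$ to turn the zero-product estimate into a bound on $\pi(a)\delta_L(b)\pi(c)$, the same passage to the limit along the bounded approximate identity using non-degeneracy, and the same use of the open mapping theorem via the closedness of the range of $L\mapsto\delta_L$. The only cosmetic difference is that the paper fixes a vector $x\in X$ and works with the $X$-valued maps $\varphi_{T,x}(a,b)=\pi(a)T\pi(b)x$, which makes the estimate $\inf_{S\in\pi(A)'}\|(T-S)\pi(b)x\|\le\dist_r(T,\pi(A)')\,\|\pi(b)x\|$ immediate; your operator-valued version is fine too, provided you note that the infimum over $S$ may be taken inside the supremum over $x$ (legitimate here because $\pi(a)L\pi(b)x=\pi(a)(L-S)\pi(b)x$ for every $S$ and every $x$).
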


\begin{proof}
Let $T\in B(X)$ and $\alpha=dist_r(T,\pi(A)').$ Fix $x\in X$ and define
$$\varphi_{T,x}: A\times A\rightarrow X, \ \ \ (a,b)\mapsto (\pi(a)\circ T\circ \pi(b))(x).$$
If $a,b\in A$ with $ab=0$, then for $S\in \pi(A)',$
\begin{eqnarray*}
\|\varphi_{T,x}(a,b)\|&=&\|\pi(a)T(\pi(b)(x))-\pi(a)S(\pi(b)(x))\|\\
&=&\|\pi(a)(T-S)(\pi(b)(x))\|\\
&\leq&\|\pi(a)\|\|(T-S)(\pi(b)(x))\|.
\end{eqnarray*}
Therefore
\begin{eqnarray*}
\|\varphi_{T,x}(a,b)\|&\leq& \|\pi(a)\| \inf_{S\in A'}\|(T-S)(\pi(b)(x))\|\\
&\leq& \|\pi(a)\|\alpha \|\pi(b)(x)\|\\
&\leq&\alpha\|\pi\|^2\|x\|\|a\|\|b\|.
\end{eqnarray*}
Since $A$ has the strong property $(\B)$ with a constant $C$, we have that for every $a,b,c\in A$,
\begin{eqnarray}\label{Eq:hyper-rep}
\|\varphi_{T,x}(ab,c)-\varphi_{T,x}(a,bc)\|\leq C\alpha\|\pi\|^2\|x\| \|a\|\|b\|\|c\|.
\end{eqnarray}
However, if $\{e_i\}_{i\in I}$ is an approximate identity in $A$ bounded by $K$, then it follows from
the non-degeneracy of $\pi$ that
$\lim_{i\to \infty} \pi(e_i)=id_X$, where the convergence happens in the strong operator topology
of $B(X)$. Hence if we put $a=c=e_i$ in \eqref{Eq:hyper-rep} and let $i\to \infty$, we have
\begin{eqnarray*}
\| \delta_T(b)(x) \| &=& \| (T\cdot b-b\cdot T)(x)\| \\
&=& \| T(\pi(b)(x))-\pi(b)T(x)\| \\
&=& \lim_{i\to \infty} \| \pi(e_ib)T(\pi(e_i)(x))-\pi(e_i)T(\pi(be_i)(x)) \| \\
&=& \lim_{i\to \infty} \| \varphi_{T,x}(e_ib,e_i)-\varphi_{T,x}(e_i,be_i) \| \\
&\leq & C K^2 \alpha\|\pi\|^2\|x\|\|b\|.
\end{eqnarray*}
Since $x\in X$ was arbitrary, we can conclude that for every $b\in A$,
\begin{align}\label{ref-ineq}
\|\delta_T(b)\|\leq C\alpha\|\pi\|^2 K^2 \|b\|.
\end{align}
Now we define
$$\phi: B(X)\rightarrow \Zc^1(A,B(X)), \ \ \ L\mapsto\delta_L.$$
Using \eqref{ref-ineq} we get
\begin{align}\label{second}
\|\delta_T\|\leq C\alpha\|\pi\|^2.
\end{align}
But $\Hc^1(A,B(X))=\frac{\Zc^1(A,B(X))}{\mathcal{N}^1(A,B(X))}$ is a Banach space so that $\mathcal{N}^1(A,B(X))=Im\phi$ is closed. Hence applying the open mapping theorem to the map $i: \frac{B(X)}{ker\phi}\rightarrow \mathcal{N}^1(A,B(X))$, we find $M>0$ such that for all $L\in B(X),$
$$dist(L, ker\phi)\leq M\|\delta_L\|$$
which, together with \eqref{second}, gives
$$dist(T, ker\phi)\leq MCK^2\|\pi\|^2\alpha= MCK^2\|\pi\|^2\dist_r(T,\pi(A)').$$
The final result follows since $ker\phi=\pi(A)'.$
\end{proof}


It is well-known that all von Neumann algebras are reflexive and injective von Neumann algebras are hyperreflexive. The former is direct and simple application of the double commutate theorem whereas as the latter is due to the beautiful work of E. Christensen \cite{Christen}. In particular, for a locally compact group $G$, $CV_2(G)$, its group von Neumann algebra, is reflexive and, when $G$ is amenable , it is hyperreflexive. In the following theorem, we extend this to all convolution operators on $L^p(G)$ for every $1<p<\infty$.

\begin{thm}
Let $G$ be a locally compact group, and let $1<p<\infty$. Then $CV_p(G)\subseteq B(L^p(G))$ is reflexive. If, in addition, $G$ is amenable, the $CV_p(G)$ is hyperreflexive and its hyperreflexivity constant is bounded by $288\pi(1+\sqrt{2}).$
\end{thm}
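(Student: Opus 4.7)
My plan is to realize $CV_p(G)$ as the commutant of a natural non-degenerate representation of $L^1(G)$ and then apply Theorem \ref{rep-hyper}.

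First I would establish the identification $CV_p(G) = \lambda_p(L^1(G))'$, where $\lambda_p \from L^1(G) \to B(L^p(G))$ is the left regular representation $\lambda_p(f)g = f*g$. Young's inequality yields $\|\lambda_p\| \leq 1$. The inclusion $CV_p(G) \subseteq \lambda_p(L^1(G))'$ is obtained by writing $f*g = \int_G f(t)(\delta_t*g)\,dt$ as a Bochner integral and pulling any $T \in CV_p(G)$ past the integral. For the reverse inclusion, using a bounded approximate identity $(e_\alpha)$ in $L^1(G)$ with $\|e_\alpha\|_1 \leq 1$ together with the factorization $\lambda_p(\delta_t * e_\alpha) = \lambda_p(t)\lambda_p(e_\alpha)$, a strong-operator limit argument shows any operator commuting with every $\lambda_p(f)$ must commute with each $\lambda_p(t)$. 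The same BAI also shows $\lambda_p$ is non-degenerate, since $\lambda_p(e_\alpha)g = e_\alpha*g \to g$ in $L^p$-norm for every $g \in L^p(G)$.

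Next, for the hyperreflexivity claim when $G$ is amenable, I would apply Theorem \ref{rep-hyper} with $A = L^1(G)$, $X = L^p(G)$, and $\pi = \lambda_p$. By Theorem \ref{C* alg and group alg constant B}, $L^1(G)$ has the strong property $(\B)$ with constant $C = 288\pi(1+\sqrt{2})$; the BAI bound is $K = 1$; and $\|\pi\| \leq 1$. Since $G$ is amenable, $L^1(G)$ is amenable, and because $B(L^p(G)) \cong (L^p(G)\widehat{\otimes} L^q(G))^*$ is a dual Banach $L^1(G)$-bimodule, $\Hc^1(L^1(G), B(L^p(G))) = 0$, which is trivially a Banach space. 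In the amenable case, the combination of Remarks \ref{amen const for group alg and C*} and \ref{C and amen const} gives that the open-mapping constant $M$ of \eqref{open map const} satisfies $M \leq AM(L^1(G)) = 1$. Substituting these into Theorem \ref{rep-hyper} bounds the hyperreflexivity constant of $\lambda_p(L^1(G))' = CV_p(G)$ by
$$MCK^2\|\pi\|^2 \leq 1 \cdot 288\pi(1+\sqrt{2}) \cdot 1 \cdot 1 = 288\pi(1+\sqrt{2}),$$
as claimed.

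For the reflexivity statement when $G$ is an arbitrary (possibly non-amenable) locally compact group, Theorem \ref{rep-hyper} does not directly apply, since $\Hc^1(L^1(G), B(L^p(G)))$ is not known to be a Banach space in general. I would instead argue using the weak-$*$ closedness of $CV_p(G)$ in $B(L^p(G))$ together with the duality with the Figa-Talamanca-Herz predual, deducing from the annihilator structure that any element of the reflexive hull of $CV_p(G)$ must commute with every left translation. The main obstacle of the proof is precisely this reflexivity step in the non-amenable setting; by contrast, the hyperreflexivity estimate above reduces, once the identification $CV_p(G) = \lambda_p(L^1(G))'$ and the numerical constants from the previous sections are in place, to a direct substitution into the bound of Theorem \ref{rep-hyper}. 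A secondary point requiring care is verifying that the open-mapping constant $M$ appearing in Theorem \ref{rep-hyper} is indeed controlled by $AM(L^1(G))$, which is the $n=1$ analogue of Remark \ref{C and amen const}.
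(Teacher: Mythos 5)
Your hyperreflexivity argument is essentially the paper's: its proof is a single line ("apply Theorem \ref{rep-hyper} with $A=L^1(G)$, $X=L^p(G)$ and $\pi$ the regular representation"), and you carry out exactly that, while supplying the bookkeeping the paper leaves implicit --- the identification of $CV_p(G)$ as the commutant of the integrated representation, non-degeneracy, $K\leq 1$, $\|\pi\|\leq 1$, $C=288\pi(1+\sqrt{2})$ from Theorem \ref{C* alg and group alg constant B}, vanishing of $\Hc^1(L^1(G),B(L^p(G)))$ by amenability and duality, and $M\leq AM(L^1(G))=1$. (On that last point, note that \eqref{open map const} concerns the inner derivations $\delta_L$ for $L\in B(X)$, so what you need is really the degree-zero instance of the averaging behind Remarks \ref{amen const for group alg and C*} and \ref{C and amen const}, i.e.\ that every bounded derivation into the dual module $B(L^p(G))\cong (L^p(G)\widehat{\otimes}L^q(G))^*$ is implemented by an element of norm at most $AM(L^1(G))\|\delta_L\|$; the substance is the same.) One place where you improve on the text: with the paper's definition $T(\delta_t*f)=\delta_t*T(f)$, the correct realization is $CV_p(G)=\lambda_p(L^1(G))'$ for the \emph{left} convolution representation $\lambda_p(f)g=f*g$, as you have it; the paper's proof names $\pi=\rho$, the right regular representation, whose commutant is in general a different algebra (already for $p=2$ one has $\rho(G)'=\lambda(G)''$ while $CV_2(G)=\lambda(G)'=\rho(G)''$), so your choice is the one that makes the argument go through. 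The only incomplete piece of your proposal is the reflexivity assertion for non-amenable $G$, where you offer only a vague duality sketch; be aware, however, that the paper's own proof says nothing about reflexivity beyond what hyperreflexivity yields in the amenable case, so you are not omitting anything the paper actually proves.
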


\begin{proof}
In Theorem \ref{rep-hyper}, let $A=L^1(G)$, $X=L^p(G)$ and $\pi=\rho$, the right regular representation
of $G$ on $L^p(G)$. Then all the assumptions of Theorem \ref{rep-hyper} are satisfied. Therefore $VCV_p(G)=\pi(l^1(G))'$ is hyperreflexive and its hyperreflexivity constant is bounded by $288\pi(1+\sqrt{2}).$
\end{proof}

\end{document}